\def\HH{{\mathrm{H}}}
\def\OO{{\mathcal{O}}}\def\PP{{\mathbb{P}}}
\def\Q{{\mathbb{Q}}}  \def\C{{\mathbb{C}}}
\def\Spec{{\mathrm{Spec\; }}} \def\Proj{{\mathrm{Proj\; }}}
\def\dim{{\mathrm{dim}}}\def\deg{{\mathrm{deg}}}
\def\divv{{\mathrm{div}}}
\def\FRC{{\mathrm{frac}}}
\def\mod{{\mathrm{mod}}}
\def\LLR{\Longleftrightarrow}
\theoremstyle{plain}
\newtheorem{thm}{Theorem}[section] 
\newtheorem{prop}[thm]{Proposition}
\newtheorem{lem}[thm]{Lemma}
\theoremstyle{definition} 
\newtheorem{defn}[thm]{Definition}
\theoremstyle{remark}
\newtheorem{rem}[thm]{Remark}
\newtheorem{ques}[thm]{Question}
\title{Classification of 2-dimensional graded normal hypersurfaces 
with $a(R)\le 6$.}
\author[Watanabe]{Kei-ichi Watanabe} 
\address{Department of Mathematis, College of Humanities and Sciences, Nihon University Setagaya, Tokyo 156-8550, Japan}
\email{watanabe@math.chs.nihon-u.ac.jp}
\thanks{This work was partially supported by JSPS Grant-in-Aid for Scientific Research  20540050 and Individual Research Expense of College of Humanity and Sciences, Nihon University.}
\begin{document}

\maketitle

\section*{Introduction}

Isolated weighted homogeneous surface singularities 
of type $R=k[X,Y,Z]/(f)$ are extensively studied by 
V.I. Arnold, H. Pinkham \cite{P1} and K. Saito and many other authors. 
Especially, K. Saito \cite{S1} studies these in terms of \lq\lq regular 
system of weights".  On the other hand, from the view point of 
commutative ring theory, the $a$-invariant $a(R)$ defined in \cite{GW}
 (in Saito's paper, the notation $\varepsilon = -a(R)$ is used) and 
 also, such singularities can be constructed by so called DPD 
 (Dolgacev-Pinkham-Demazure) constrction \cite{Dol}\cite{P2}\cite{Dem}. 
\par   
The author tried the classification of all the possible weights 
for given $a(R)$ using DPD construction of normal graded rings 
and  it turned out that the procedure is so simple and nearly automatic. 
\par
Although this classification is \lq\lq known" in the literature (cf. \cite{S1}, 
\cite{Wag}), it seems that the algebraic or ring-theoretic classification is not done yet.
\par
Another good point of our classification is that we can draw the \lq\lq graph" of the 
resolution of the singularity of $\Spec(R)$ instantly from the expression as 
$R = R(X,D)$. 
\par
Below, we present the classification of such singularities with $0<a(R)\le 6$. 
We prove in general that for a given $\alpha>0$, the number of types of $R$ for 
2-dimensional normal  graded hypersurfaces with $a(R)=\alpha$ is finite.

\section{Preliminaries}

Let $R=\oplus_{n\ge 0}R_n= k[u,v,w]/(f)$ be a $2$-dimensional graded normal 
hypersurface, 
where $k$ is an algebraically closed field of any characteristic.  
We always assume that the grading of $R$ is given so that $R_n\ne 0$
 for $n\gg 0$. 
We put $X=\Proj(R)$.  Since $\dim R=2$ and $R$ is normal, $X$ is a 
smooth curve.  Then by the construction of Dolgachev, Pinkham  and Demazure (\cite{Dem},
\cite{P2}),  there is 
an ample $\Q$-Cartier divisor $D$ (that is, $ND$ is an ample divisor on 
$X$ for some positive integer $N$), such that 
$$R\cong R(X,D) = \oplus_{n\ge 0}H^0(X,O_X(nD))\cdot T^n\subset k(X)[T]$$
as graded rings, where $T$ is a variable over $k(X)$ and 
$$H^0(X,O_X(nD))=\{f\in k(X)\;|\;\divv_X(f)+nD\ge 0\}\cup\{0\}.$$

We denote by $[nD]$ the integral part of $nD$ so that $H^0(X,O_X(nD))= 
H^0(X,O_X([nD]))$. 

Now, let us begin the classification. In the following, 
 $X$ is a smooth curve of genus $g$ and $D$ is a fractional divisor
  on $X$ such that $ND$ is an ample integral (Cartier) divisor for some $N>0$. 
\par
We  always denote
\[\leqno{(1.0.1)}\quad D=E+ \sum_{i=1}^r \dfrac{p_i}{q_i} P_i \quad (p_i, q_i \;{\mathrm{are \; positive  \; integers \; and \;}}\forall i,\; (p_i,q_i)=1),\]
where $E$ is an integral divisor. In this case, we denote 
\[\leqno{(1.0.2)}\quad
\FRC(D)=\sum_{i=1}^r \dfrac{q_i-1}{q_i} P_i. \]

At the same time, by our assumption $R\cong k[u,v,w]/(f)$. If 
$\deg(u,v,w;f)=(a,b,c;h)$, then by \cite{GW}, 
\[\leqno{(1.0.3)}\quad a(R)= h-(a+b+c). \]

We always assume $\deg(u,v,w;f)=(a,b,c;h)$ and also that 
 $a\le b\le c$. We call $(a,b,c; h)$ the {\it type} of $R$.  In this paper, 
we will determine all the possible types of $R$ for $0< a(R)\le 6$
\footnote{See Remark \ref{negative} for the case $\alpha <0$}. \par

We list our tools to classify. 

\begin{prop}\label{Fund}({\bf Fundamental formulas}) 
Assume that $R=R(X,D)$\par
\noindent $
\cong k[u,v,w]/(f)$ with $\deg(u,v,w;f)=(a,b,c;h)$ 
and $a(R)=h-(a+b+c)=\alpha$. Then we have the following equalities. \par
(1) \cite{W} Since $R$ is Gorenstein  with $a(R)=\alpha$, we have 
$$\alpha D\sim K_X + \FRC(D) = K_X +\sum_{i=1}^r \dfrac{q_i-1}{q_i} P_i,$$
where, in general, $D_1\sim D_2$ means that $D_1-D_2=\divv_X(\phi )$ for 
some $\phi \in k(X)$.\par
In particular, the genus $g$ of $X$ is given by 
$$g=g(X) = H^0(X, O_X(K_X))= \dim R_{\alpha}.$$

(2) \cite{To}  If $P(R,t)=\sum_{n\ge 0} \dim R_n t^n$ is the Poincare series of R, then 
$$\lim_{t\to 1} (1-t)^2P(R,t) = \deg D.$$
 Since $P(R,t)=\dfrac{1-t^h}{(1-t^a)(1-t^b)(1-t^c)}$  in our case, we have 
$$\deg D= \dfrac{h}{abc}=\dfrac{\alpha}{abc}+\dfrac{1}{ab}+\dfrac{1}{ac}+
\dfrac{1}{bc}.$$

Note that the latter expression is a decreasing function of $a,b,c$. 
\end{prop}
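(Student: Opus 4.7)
The plan is to handle the two parts separately, since they rely on different pieces of standard machinery.

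For part (1), I would invoke the graded canonical module formula for DPD rings from \cite{W}: it identifies $\omega_R$ with a graded module of the form $R(X, K_X + \FRC(D))$ (with an appropriate grading shift). The Gorenstein condition $\omega_R \cong R(-\alpha)$ then translates directly into the $\Q$-linear equivalence $\alpha D \sim K_X + \FRC(D)$ on $X$, with $\alpha$ matching $a(R)$ by the standard identification of grading shifts. For the equality $\dim R_\alpha = g$, I would use that $R_\alpha = H^0(X, \OO_X(\lfloor \alpha D \rfloor))$ and that each coefficient of $\FRC(D)$ lies in $[0,1)$; since the two $\Q$-divisors $\alpha D$ and $K_X + \FRC(D)$ differ by an integral principal divisor $\divv_X(\phi)$, taking floors gives $\lfloor \alpha D \rfloor \sim K_X$ as integral divisors. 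Hence $\dim R_\alpha = h^0(X, K_X) = g$.

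For part (2), I would first establish the general limit formula by Riemann--Roch on $X$: for $n \gg 0$, $\OO_X(\lfloor nD \rfloor)$ has vanishing $H^1$, so $\dim R_n = \deg\lfloor nD\rfloor - g + 1 = n\deg D + O(1)$. Summing this asymptotic and multiplying by $(1-t)^2$ gives $\lim_{t\to 1}(1-t)^2 P(R,t) = \deg D$ by a standard Tauberian argument (this is the identity attributed to \cite{To}). For the hypersurface specialization, I would expand the explicit Poincar\'e series at $t = 1$: using $1 - t^k = k(1-t) + O((1-t)^2)$, the numerator $1 - t^h$ contributes $h(1-t)$ and the denominator contributes $abc(1-t)^3$, so $(1-t)^2 P(R,t) \to h/(abc)$. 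Substituting $h = \alpha + a + b + c$ and splitting the fraction yields the displayed decomposition.

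Neither step presents a serious obstacle; the proposition is essentially a collation of results from \cite{W} and \cite{To}. The only real care required is bookkeeping: tracking the sign and shift conventions in the canonical-module formula, and exploiting both that $\lfloor \FRC(D) \rfloor = 0$ and that $\divv_X(\phi)$ is always integral in order to justify the floor-preserving manipulation in part (1). Once those conventions are pinned down, the remaining computations in each part are routine.
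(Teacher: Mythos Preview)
Your proposal is correct and matches the paper's treatment: the paper does not supply an independent proof of this proposition at all, but simply records the two formulas with attributions to \cite{W} and \cite{To} respectively. Your sketch accurately unpacks the content of those references---the canonical-module identification $\omega_R \cong \bigoplus_n H^0(X,\OO_X(nD+K_X+\FRC(D)))T^n$ from \cite{W} for part (1), and the Riemann--Roch asymptotic $\dim R_n = n\deg D + O(1)$ underlying Tomari's limit in part (2)---so there is nothing to correct.
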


\begin{lem}\label{bd-of-c}\cite{OW}  Let $R=k[x,y,z]/(f)$ be a normal graded ring with 
type $(a,b,c;h)$ with $h=a+b+c+\alpha $.  Then  \par
(1) If $h$ is not a multiple of $c$, then either $h-a$ or $h-b$ is a multiplie of $c$. 
The same is true  for $a,b$. Namely, at least one of $h, h-a,h-c$ is a 
multiple of 
$b$ and at least one of $h,h-b,h-c$ is a multiple of $a$. \par
(2) If a prime number $p$ divides two of $a,b,c$, then $p$ divides $h$. 
In particular, if $\alpha$ is even, then at most one among $a,b,c$ is even. \par
(3) $c\le a+b+\alpha $. 
\end{lem}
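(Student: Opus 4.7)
The plan is to exploit two ingredients: (i) $R$ being a $2$-dimensional normal hypersurface forces $V(f)$ to have an isolated singularity at the origin, and (ii) every monomial $u^iv^jw^k$ in $f$ satisfies $ai+bj+ck=h$. These combine to give strong restrictions on the monomials $f$ must contain, from which (1)--(3) follow. I will also use freely that $\gcd(a,b,c)=1$, which is forced by the standing assumption $R_n\ne 0$ for $n\gg 0$.

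For (1), I would argue by contradiction: suppose none of $h,h-a,h-b$ is a multiple of $c$. Then $f$ cannot contain any monomial of the form $w^k$, $uw^k$, or $vw^k$, since their weighted degrees $ck,\;a+ck,\;b+ck$ cannot equal $h$; hence $f\in(u,v)^2$. But then each of $\partial f/\partial u$, $\partial f/\partial v$, $\partial f/\partial w$ lies in $(u,v)$, so all three partials vanish at every point $(0,0,w_0)$. Since $f$ itself also vanishes along the line $u=v=0$, this produces a one-dimensional singular locus, contradicting the isolated singularity implied by normality. The symmetric statements for $a$ and $b$ follow by permuting $(u,v,w)$.

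For (2), assume $p\mid a$ and $p\mid b$; then $p\nmid c$ since $\gcd(a,b,c)=1$. Reducing $ai+bj+ck=h$ modulo $p$ gives $ck\equiv h\pmod p$ for every monomial of $f$, so $k$ is determined modulo $p$. If $p\nmid h$, this forced residue is nonzero, so every monomial has $k\ge 1$, i.e.\ $w\mid f$; irreducibility of $f$ then makes $f$ a scalar multiple of $w$ and $R\cong k[u,v]$ with both variables in degrees divisible by $p$, again violating $R_n\ne 0$ for $n\gg 0$. Hence $p\mid h$. The ``in particular'' follows by taking $p=2$: if $\alpha$ were even and two of $a,b,c$ were even, then $h$ would be even while $a+b+c$ would be odd, contradicting $\alpha=h-(a+b+c)$.

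Finally, (3) is a quick arithmetic corollary of (1): using $h=a+b+c+\alpha$, the statement ``$c$ divides one of $h,h-a,h-b$'' becomes ``$c$ divides one of $a+b+\alpha,\;b+\alpha,\;a+\alpha$'', each of which is strictly positive (as $a,b\ge 1$ and $\alpha>0$), so $c$ is at most $a+b+\alpha$ in every case. The main subtlety I expect lies in (2), where one must correctly identify that irreducibility together with the grading assumption (not normality alone) is what rules out the degenerate case $R\cong k[u,v]$; the remainder is straightforward arithmetic once the Jacobian criterion has been invoked for (1).
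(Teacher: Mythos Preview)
Your proposal is correct and follows essentially the same route as the paper. Both argue that normality forces $f$ to contain a monomial of the form $w^k$, $uw^k$, or $vw^k$ (otherwise $f\in(u,v)^2$ and the singular locus contains the line $u=v=0$), which immediately yields (1) and (3); and for (2) both observe that $p\mid a,b$ with $p\nmid h$ would force $w\mid f$. Your write-up simply makes explicit the Jacobian-criterion step and the reason $w\mid f$ is a contradiction, where the paper's proof compresses this to a single sentence.
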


\begin{proof} If $R=k[x,y,z]/(f)$ is normal, then $f$ must contain monomial of the 
form $x^n$ or $x^ny$ or $x^nz$. The statement (1), (3) follows from this fact.  As 
for (2), It is easy to see that if $p$ divides $a,b$ and not $h$, then $f$ must be 
divisible by $z$. 
\end{proof}

We list some properties of $R$ when $R$ is Gorenstein.

\begin{lem}\label{dual} Let $R=R(X,D)$ be a normal graded ring with 
$D$ as in (1.0.1) and 
assume that $R$ is Gorenstein with $a(R)=\alpha$.  Then we have the following formulas.
\begin{enumerate}
\item 
For every 
$n$, $0\le n\le \alpha$, we have $\deg [nD]+\deg [(\alpha -n)D]
=\deg [\alpha D] = 2g -2$, where  $g$ is the genus of $X$.\par
\item $[(\alpha + 1)D] = K_X + E + \sum_{i=1}^r P_i$. 
\item $[(2\alpha + 1)D] = 2K_X + E + 2\sum_{p_i \ge 2} P_i + \sum_{p_i=1} P_i$.  
\end{enumerate}
\end{lem}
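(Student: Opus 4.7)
The plan is to take Proposition~\ref{Fund}(1) as the main input: it says $\alpha D \sim K_X + \FRC(D) = K_X + \sum_i \frac{q_i-1}{q_i} P_i$ as $\Q$-divisors. Since the intertwining principal divisor is integral, the fractional parts on the two sides must agree, which forces the arithmetic congruence $\alpha p_i \equiv -1 \pmod{q_i}$ for every $i$. Stripping off the common fractional part immediately yields $[\alpha D] \sim K_X$ as integral divisors, hence $\deg[\alpha D] = 2g-2$. This congruence is the engine driving all three parts.

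For (1), I would start from $nD + (\alpha-n)D = \alpha D$ and compare integer parts coefficient by coefficient. At each $P_i$, set $a := np_i \bmod q_i$ and $b := (\alpha-n)p_i \bmod q_i$, both in $\{0,1,\ldots,q_i-1\}$. The congruence above forces $a + b \equiv q_i - 1 \pmod{q_i}$, and since $a + b \le 2q_i - 2$, the only possibility is $a + b = q_i - 1$ exactly, with no carry. From this $\lfloor np_i/q_i\rfloor + \lfloor(\alpha-n)p_i/q_i\rfloor = \lfloor\alpha p_i/q_i\rfloor$ follows, giving $[nD] + [(\alpha-n)D] = [\alpha D]$ as integral divisors; taking degrees yields (1).

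For (2) and (3), I would add $D$, respectively $\alpha D + D$, to $\alpha D$ as $\Q$-divisors modulo linear equivalence:
\[
(\alpha+1)D \sim K_X + E + \sum_i \tfrac{p_i+q_i-1}{q_i} P_i, \qquad (2\alpha+1)D \sim 2K_X + E + \sum_i \tfrac{p_i+2q_i-2}{q_i} P_i.
\]
Passing to integer parts preserves linear equivalence because the intertwining divisor is principal and integral. Under the normal-form bounds $1 \le p_i \le q_i - 1$, one reads off $\lfloor \frac{p_i+q_i-1}{q_i}\rfloor = 1$ always, while $\lfloor \frac{p_i+2q_i-2}{q_i}\rfloor$ equals $2$ if $p_i \ge 2$ and $1$ if $p_i = 1$, giving precisely (2) and (3). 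The one genuine subtlety is the ``no carry'' step in (1); everything else reduces to routine bookkeeping between fractional parts and linear equivalence.
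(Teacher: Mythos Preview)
Your argument is correct and follows precisely the approach the paper intends: the paper's entire proof is the single line ``This follows easily from $\alpha D = K_X + \FRC(D)$,'' and you have supplied exactly the arithmetic that makes this line work, namely the congruence $\alpha p_i \equiv -1 \pmod{q_i}$ and the resulting ``no carry'' identity for (1), together with the elementary floor computations for (2) and (3). One cosmetic remark: the lemma writes ``$=$'' in (2) and (3) while you (correctly, starting from Proposition~\ref{Fund}(1)) obtain ``$\sim$''; the paper silently normalizes so that $\alpha D = K_X + \FRC(D)$ holds on the nose, which upgrades your equivalences to equalities, and in any case only degrees are ever used downstream.
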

\begin{proof} This follows easily from $\alpha D=K_X+\FRC(D)$. 
\end{proof}

Next we recall some fundamental property of $p_g(R)$. \par

\begin{defn}
If $X \to \Spec(R)$ is a resolution of singularities of $R$, then the geometric 
genus of $R$, $p_g(R)$ is defined by 
$$p_g(R)=\dim_k \HH^1(X, O_X).$$
\end{defn}

When $R$  is a Gorenstein graded ring, it is proved in \cite{W} that

\[\leqno{(1.5.1)}\quad
\qquad p_g(R)=\sum_{n=0}^{a(R)} \dim_k R_n.\]

In the following, we denote $a(R)=\alpha$ to avoid confusion with $a$ 
(the minimal positive degree with $R_a\ne 0$).

\begin{rem}\label{resol} \cite{Dem} If $R=R(X,D)$ with $D=E+ \sum_{i=1}^r \dfrac{p_i}{q_i} P_i$, 
the \lq\lq graph" of the resolution of singularity of $\Spec(R)$ is a so called \lq\lq 
star-shaped" graph with \lq\lq central curve" $X$ with self intersection $X^2 = - \lceil D\rceil$ and 
branch of ${\Bbb P}^1$'s intersecting at $P_i\in X$  with self intersection number 
$-b_1,\ldots , -b_s$ if the continued-fraction expression of $\dfrac{q_i}{q_i-p_i}$ is as  
$\dfrac{q_i}{q_i-p_i}= b_1- \dfrac{1}{b_2- \dfrac{1}{b_3- \ldots }}$
\end{rem}

\section{General results for  given   $\alpha = a(R)>0$.}

First, we will show the finiteness of the types $(a,b,c; h)$ for given $\alpha =a(R)>0$. 

\begin{thm}\label{finite}
If we fix $\alpha =a(R) > 0$, the number of 
$(a,b,c; h)$ for a normal graded ring $R=k[x,y,z]/(f)$ with $h=
a+b+c+\alpha $ is finite
\footnote{See Remark \ref{negative} for the case $\alpha \le 0$.}.
\end{thm}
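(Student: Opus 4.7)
My plan is as follows. WLOG assume $a\le b\le c$. Lemma~\ref{bd-of-c}(3) already gives $c\le a+b+\alpha$, so the whole problem reduces to bounding $a$ and $b$ by a function of $\alpha$ alone.

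The central input is the identity obtained by combining Proposition~\ref{Fund}(1) and~(2):
\[
\alpha\cdot\frac{a+b+c+\alpha}{abc}\;=\;\alpha\deg D\;=\;2g-2+\sum_{i=1}^{r}\frac{q_i-1}{q_i},
\]
where $g=\dim_k R_\alpha\ge 0$ and each $q_i\ge 2$. Write $\sigma=\sum_{i}(q_i-1)/q_i\ge 0$. Since $\deg D>0$ and $\alpha>0$, the right-hand side is strictly positive, i.e.\ $2g-2+\sigma>0$.

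The first step is to bound $2g-2+\sigma$ below by a positive absolute constant. This is a short Hurwitz-type check. If $g\ge 2$ then $2g-2+\sigma\ge 2$; if $g=1$ then $\sigma>0$ forces $\sigma\ge\tfrac{1}{2}$ (the minimal positive value of a sum with summands $(q-1)/q\ge\tfrac12$); if $g=0$ then necessarily $\sigma>2$, and the minimum of $\sum (q_i-1)/q_i$ strictly exceeding $2$, over all finite multisets $\{q_i\ge 2\}$, equals $2+\tfrac{1}{42}$, attained at the triangle signature $(2,3,7)$. Hence, uniformly,
\[
\alpha\cdot\frac{a+b+c+\alpha}{abc}\;\ge\;\frac{1}{42}.
\]

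The second step converts this into a numerical bound on the weights. Clearing denominators gives $abc\le 42\alpha(a+b+c+\alpha)$, and then $c\le a+b+\alpha$ turns this into $abc\le 84\alpha(a+b+\alpha)$. Since $b\le c$, we have $ab^{2}\le abc$, whence
\[
ab^{2}\;\le\;84\alpha(a+b+\alpha).
\]
For each fixed $a\ge 1$ this is a quadratic inequality in $b$, and completing the square bounds $b$ linearly in $\alpha$. A fortiori $a\le b$ is bounded, and $c\le a+b+\alpha$ is bounded as well. This yields the desired finiteness.

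The main obstacle is the Hurwitz step: one must carefully rule out the degenerate configurations ($g=0$ with $r\le 2$, which would contradict $\sigma>2$, and $g=1$ with $\sigma=0$, which would contradict $\deg D>0$) before invoking the classical Hurwitz inequality for $(2,3,7)$ triangle groups. Once that absolute lower bound $1/42$ is in hand, the remaining manipulation is elementary.
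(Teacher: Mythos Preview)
Your argument is correct and follows essentially the same route as the paper: both combine Proposition~\ref{Fund}(1)--(2) to get $\alpha\deg D=2g-2+\sum(q_i-1)/q_i$, then split into the cases $g\ge 2$, $g=1$, $g=0$ to obtain the uniform Hurwitz-type lower bound $\alpha\deg D\ge 1/42$, and finally convert this into a bound on the weights. The only cosmetic difference is that the paper first reduces to $c\ge\alpha$ so that $\deg D\le 4/(ab)$, yielding directly $ab<168\alpha$, whereas you keep $c$ in play and pass through $ab^{2}\le 84\alpha(a+b+\alpha)$; your final step is cleanest if you use $1\le a\le b$ at once (so $b^{2}\le 84\alpha(2b+\alpha)$) rather than ``for each fixed $a$''.
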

  
In the rest of this section, we fix $\alpha >0$, always assume that $a\le b\le c, h=a+b+c+\alpha $
 and $(a,b,c)=1$.  The proof is done in a series of lemmas.
Note that by Lemma \ref{bd-of-c}, it suffices to show that the number of possible $(a,b)$ 
is finite. Also, for  simplicity, we assume that $c\ge \alpha$. 

\begin{lem}\label{degDle4ab} (1) We have $\deg D \le \dfrac{4}{ab}$.\par
(2) If $g\ge 2$, then $ab< \dfrac{4\alpha }{2g-2}$.
\end{lem}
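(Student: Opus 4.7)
The plan is to apply Proposition~\ref{Fund} directly. For part (1), I start from the explicit formula
$$\deg D = \frac{\alpha}{abc} + \frac{1}{ab} + \frac{1}{ac} + \frac{1}{bc}$$
and bound each of the four summands by $\frac{1}{ab}$. The standing hypotheses $a \le b \le c$ immediately give $\frac{1}{ac} \le \frac{1}{ab}$ and $\frac{1}{bc} \le \frac{1}{ab}$, while the reduction assumption $c \ge \alpha$ (made just before the lemma) gives $\frac{\alpha}{abc} \le \frac{1}{ab}$. Adding the four estimates yields $\deg D \le \frac{4}{ab}$.

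For part (2), the idea is to combine (1) with Proposition~\ref{Fund}(1). Taking degrees in the relation $\alpha D \sim K_X + \FRC(D)$ gives
$$\alpha \deg D = (2g-2) + \deg \FRC(D) \ge 2g-2,$$
since $\FRC(D) = \sum \frac{q_i-1}{q_i} P_i$ is an effective $\Q$-divisor. Assuming $g \ge 2$ so that $2g-2 > 0$, I then substitute the bound from (1) to obtain
$$\frac{4\alpha}{ab} \ge \alpha \deg D \ge 2g-2,$$
which rearranges to $ab \le \frac{4\alpha}{2g-2}$.

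The only subtle point, and the one I would have to check carefully, is the strict inequality in (2). Tracing the chain of estimates, equality throughout part (1) forces $a=b=c$ and $c=\alpha$, hence (using $(a,b,c)=1$) $a=b=c=\alpha=1$; moreover equality in $\alpha\deg D \ge 2g-2$ forces $\FRC(D)=0$, i.e.\ $D$ is an integral divisor. I would argue that these simultaneous equalities correspond to a degenerate configuration (essentially a plane curve case) that can be ruled out or else handled separately, so that for all other $(a,b,c;h)$ one of the estimates is strict and the inequality $ab < \frac{4\alpha}{2g-2}$ holds. This verification of strictness is the only nontrivial step; everything else is routine arithmetic on the formulas from Section~1.
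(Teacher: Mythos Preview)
Your argument is essentially the paper's: for (1) it writes $\deg D = \dfrac{a+b+c+\alpha}{abc} \le \dfrac{4c}{abc} = \dfrac{4}{ab}$ using $a,b,\alpha \le c$, and for (2) it combines this with $\alpha\,\deg D \ge 2g-2$ exactly as you do. The paper simply asserts the strict inequality in (2) without further justification, so your caution is well placed; in fact the case $(a,b,c;h)=(1,1,1;4)$ with $\alpha=1$, $g=3$ attains equality, but since the lemma is only used to prove finiteness (Theorem~\ref{finite}), the non-strict bound $ab \le \dfrac{4\alpha}{2g-2}$ already suffices and your remaining verification is unnecessary.
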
 
\begin{proof} (1) By our assumption $c\ge \alpha$, 
 $\deg D=\dfrac{a+b+c+\alpha }{abc}\le \dfrac{4}{ab}$. \par
 (2) Since $\alpha D\sim K_X+\FRC(D)$, $\deg D  \ge \dfrac{2g-2}{\alpha }$ and 
the LHS is less than $\dfrac{4}{ab}$.
 \end{proof}

\begin{lem} If $g=1$, then $ab< 8\alpha $.
\end{lem}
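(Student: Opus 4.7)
The plan is to combine the degree identity from Proposition \ref{Fund}(1), specialized to $g=1$, with the upper bound $\deg D \le 4/(ab)$ from Lemma \ref{degDle4ab}(1). The whole proof is very short; the key observation is that in genus one the canonical class contributes nothing to $\deg(\alpha D)$, so the lower bound on $\deg D$ comes entirely from the fractional part of $D$.

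First, I would take degrees in the linear equivalence $\alpha D\sim K_X+\FRC(D)$. When $g=1$ we have $\deg K_X=2g-2=0$, so
$$\alpha \deg D \;=\; \deg\FRC(D) \;=\; \sum_{i=1}^{r}\frac{q_i-1}{q_i}.$$
Since $D$ is ample we have $\deg D>0$, so the right-hand side is positive; by the convention in $(1.0.1)$ each $q_i\ge 2$, and every nonzero term $(q_i-1)/q_i$ is at least $1/2$. Hence
$$\alpha \deg D \;\ge\; \tfrac{1}{2}, \qquad\text{i.e.,}\qquad \deg D \;\ge\; \frac{1}{2\alpha}.$$

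Second, combining with Lemma \ref{degDle4ab}(1) yields $\tfrac{1}{2\alpha}\le \deg D\le \tfrac{4}{ab}$, and hence $ab\le 8\alpha$. To upgrade this to the strict inequality in the statement, I would note that equality in Lemma \ref{degDle4ab}(1) forces $a+b+\alpha=3c$, which, under $a\le b\le c$ and $\alpha\le c$, happens only when $a=b=c=\alpha$. The standing coprimality hypothesis $(a,b,c)=1$ then gives $\alpha=1$ and $ab=1$, contradicting the hypothetical $ab=8\alpha=8$. Therefore $ab<8\alpha$.

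The only mildly delicate step is ruling out equality; the rest is essentially a one-line combination of the two earlier results, so I do not anticipate any real obstacle.
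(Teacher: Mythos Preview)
Your argument is correct and follows the same route as the paper: use $\alpha\deg D=\deg\FRC(D)\ge\tfrac12$ when $g=1$, then combine with $\deg D\le 4/(ab)$ from Lemma~\ref{degDle4ab}(1). Your explicit treatment of the equality case (forcing $a=b=c=\alpha$ and then invoking $(a,b,c)=1$) is in fact more careful than the paper, which simply writes the final inequality as strict without comment.
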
 
\begin{proof} As in the previous lemma, since $\alpha\; \deg D \sim \FRC(D)$ and 
$\deg \;\FRC(D)\ge \dfrac{1}{2}$, we have 
$\deg D=\dfrac{a+b+c+\alpha }{abc} \ge \dfrac{1}{2\alpha }$. 
Hence we have  
$\dfrac{4}{ab}> \dfrac{1}{2\alpha}$. 
\end{proof}

\begin{lem} If $g=0$, then $ab< 168\alpha $.
\end{lem}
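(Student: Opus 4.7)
The strategy mirrors the two preceding lemmas: produce a lower bound on $\deg D$ and feed it into Lemma \ref{degDle4ab}(1). Since $g=0$ we have $\deg K_X=-2$, so taking degrees in the relation $\alpha D\sim K_X+\FRC(D)$ of Proposition \ref{Fund}(1) yields
\[
\alpha\,\deg D\;=\;-2+\sum_{i=1}^r\Bigl(1-\dfrac{1}{q_i}\Bigr).
\]
I may assume every $q_i\ge 2$, since an index with $q_i=1$ contributes nothing to $\FRC(D)$ and can be absorbed into $E$. Because $ND$ is ample for some $N>0$, the left-hand side is strictly positive, so $\sum(1-1/q_i)>2$.

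The key arithmetic input is a Hurwitz-type minimization: over all tuples of integers $q_i\ge 2$ making $\sum_{i=1}^{r}(1-1/q_i)-2$ strictly positive, its minimum equals $1/42$, attained only by $(q_1,q_2,q_3)=(2,3,7)$. I would verify this by a short case split on $r$. The cases $r\le 2$ are impossible because the expression is then non-positive. For $r=3$, one must maximize $1/q_1+1/q_2+1/q_3$ subject to $\sum 1/q_i<1$; sorting $q_1\le q_2\le q_3$ and examining the heads $(q_1,q_2)\in\{(2,3),(2,4),(2,5),(3,3),\ldots\}$, the infimum $1/42$ is achieved at $(2,3,7)$ (the next smallest values are $1/24,1/20,1/12$). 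For $r\ge 4$ the minimum is $\ge 1/6$, already realized by $(2,2,2,3)$. Plugging this into the displayed identity yields $\deg D\ge 1/(42\alpha)$, and combining with Lemma \ref{degDle4ab}(1) gives
\[
\dfrac{4}{ab}\;\ge\;\deg D\;\ge\;\dfrac{1}{42\alpha},\qquad\text{whence}\qquad ab\le 168\alpha.
\]

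To upgrade to the strict inequality $ab<168\alpha$, I would show the two bounds cannot be simultaneously tight. Equality in the Hurwitz bound forces $(q_1,q_2,q_3)=(2,3,7)$, while equality in $\deg D\le 4/(ab)$ amounts to $a+b+\alpha=3c$, which under $a\le b\le c$ and $c\ge\alpha$ collapses to $a=b=c=\alpha$. Combined with $(a,b,c)=1$ this forces $a=b=c=\alpha=1$ and $h=4$, making $R$ the homogeneous coordinate ring of a smooth quartic in $\PP^2$; but such a curve has genus $3$, contradicting $g=0$. The only real obstacle is packaging the Hurwitz-type minimization cleanly; the rest is routine arithmetic, and the appearance of the constant $168=4\cdot 42$ is its customary signature.
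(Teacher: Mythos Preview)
Your argument is essentially the paper's: both derive $\alpha\,\deg D=\deg\FRC(D)-2$ from $g=0$, invoke the Hurwitz-type fact that the minimum positive value of $\sum(1-1/q_i)-2$ is $1/42$, and combine with Lemma~\ref{degDle4ab}(1). You supply considerably more detail than the paper, which simply asserts the $1/42$ bound and does not separately justify the strict inequality; your treatment of both points is sound (indeed the strictness follows even more quickly by noting that $a=b=c=\alpha=1$ would give $\deg D=4$, incompatible with $\deg D=1/42$).
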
 
\begin{proof} We have $\alpha \deg D = \FRC(D) -2$ and it is easy to show that 
if $\sum_{i=1}^r \dfrac{q_i-1}{q_i} -2>0$, then the minimal such value 
of the LHS is $\dfrac{1}{42}$. 
\end{proof}

By these Lemmas, we have proved  that the number of types of $R$ is finite for a fixed $\alpha=a(R)$. 

\begin{rem}\label{negative}  Although the following results are somewhat \lq\lq known"
 in the literature,
I include the cases of $\alpha= a(R)\le 0$ for the completeness. \par
If $\alpha =0$, then $K_X=0$ and $\FRC(D)=0$. Hence $g(X)=1$ and $\deg D\le 3$.
We have $(a,b,c;h)= (1,2,3;6), (1,2,2;4),(1,1,1;3)$ according to $\deg D=1,2,3$ respectively.\par

If $\alpha <0$, then since $\deg(K_X + \FRC(D)) <0$,   we have $X\cong \PP^1,  
r\le 3$ and if $r=3$, 
$(q_1,q_2,q_3)$ is either $(2,2,n),(2,3,3), (2,3,4)$ or $(2,3,5)$.  If we put $D= 
-(K_X + \FRC(D))$ in these cases, then we have $\alpha =-1$ and we get  
$(a,b,c;h)=$\par
\noindent $(2,n,n+1; 2n+2), (3,4,6;12), 
(4,6,9;18), (6,10,15)$ respectively, corresponding to $(D_{n+2}), (E_6), (E_7), (E_8)$ 
singularities.  Note that the number of types is infinite in this case. \par
Since in the case $\alpha<0$ and $r=3$, there is no integer $n<-1$ with 
$nD \sim K_X + \FRC(D)$,  if $\alpha <-1$, then $r\le 2$ and putting $P_1=(0)$ and 
$P_2=(\infty)$, then $R$ is generated by monomials and is isomorphic to a ring of the form 
$k[u,v,w]/(uv - w^n)$.  Note that in this case, the type $(a,b,c;h)$ is not uniquely determined 
by the ring. On the contrary, if $\alpha \ge 0$ or $r\ge 3$, the type   $(a,b,c;h)$ is uniquely determined by the ring $k[X,Y,Z]/(f)$ since the resolution of $\Spec(R)$ given in \ref{resol}
is a minimal good resolution\footnote{A resolution whose exceptional set consists of smooth 
curves with normal crossings and  which is minimal resolution satisfying this condition; 
namely, which contains no $(-1)$ curve intersecting to at most $2$ 
other irreducible curves.} and conversely a minimal good resolution determines $R$ as a 
graded ring.  Related general statement can be found in \cite{S0}.
\end{rem}

In the following, we consider only the case $\alpha >0$.  

\begin{ques} Assume that $R=k[X,Y,Z,W]/(f)$ is graded with type $(a,b,c,d; h)$ 
and has isolated singularity. For a given $\alpha >0$, is the number of $(a,b,c,d; h)$ 
with $\alpha = h - (a+b+c+d)$ finite ? 
\end{ques}

The following Theorem is the main result of \cite{S2}.  We give a proof  here because 
it is much simpler by our method than the one given there.

\begin{thm}\label{alpha+1} For any given $\alpha >0$, we have either $R_{\alpha -1}\ne 0$ or 
$R_{\alpha +1}\ne 0$. 
\end{thm}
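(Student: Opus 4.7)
The plan is to translate both $\dim R_{\alpha-1}$ and $\dim R_{\alpha+1}$ into first cohomology dimensions on $X$ via Serre duality, then assume both vanish and extract a contradiction from Riemann--Roch together with the ampleness of $D$.

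I will adopt the standard normalization $0 < p_i < q_i$ in $(1.0.1)$, so that $[D] = E$. The first step is to identify $[(\alpha-1)D]$: subtracting $D$ from the relation $\alpha D \sim K_X + \FRC(D)$ of Proposition \ref{Fund}(1) gives
\[ (\alpha-1)D \sim K_X - E + \sum_{i=1}^r \frac{q_i - 1 - p_i}{q_i}\, P_i, \]
and since every fractional coefficient lies in $[0, 1)$, I conclude $[(\alpha-1)D] \sim K_X - [D]$. Combining this with Serre duality on $X$, and using Lemma \ref{dual}(2) for $[(\alpha+1)D]$, yields the two clean identifications
\[ \dim R_{\alpha-1} = \dim H^1(X, O_X(E)), \qquad \dim R_{\alpha+1} = \dim H^1\!\left(X, O_X\!\left(-E - \textstyle\sum_{i=1}^r P_i\right)\right). \]

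I will then suppose for contradiction that both vanish. Riemann--Roch applied to each of these line bundles, combined with the nonnegativity of $\dim H^0$, forces $\deg E \ge g - 1$ and $\deg E + r \le 1 - g$; subtracting these inequalities produces the master inequality $2g + r \le 2$. A three-case split on $g$ finishes: $g \ge 2$ is immediately impossible; $g = 1$ forces $r = 0$, whence $\FRC(D) = 0$ and $\alpha D \sim K_X \sim 0$ on the elliptic curve $X$, contradicting $\alpha \deg D > 0$; and $g = 0$ forces $r \le 2$, but Proposition \ref{Fund}(1) gives $\alpha \deg D = -2 + r - \sum 1/q_i$, which is negative for any $r \le 2$, again contradicting ampleness.

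The hardest step should be the clean derivation of $[(\alpha-1)D] \sim K_X - [D]$, where the fractional coefficients $p_i/q_i$ and $(q_i - 1 - p_i)/q_i$ must be combined carefully point by point; once that identification is in hand the remaining Riemann--Roch computation and case split are routine.
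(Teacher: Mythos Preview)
Your argument is correct. Both proofs rest on the identification of $[(\alpha\pm 1)D]$ from Lemma~\ref{dual}(2) and the companion formula $[(\alpha-1)D]\sim K_X-E$, but the paper proceeds more directly: from $\deg D>0$ it extracts $\deg E\ge 1-r$, so $\deg[(\alpha+1)D]\ge 2g-1$, which already gives $R_{\alpha+1}\ne 0$ whenever $g>0$; only the case $g=0$ requires looking at $R_{\alpha-1}$, and there a simple split on $\deg E\le -2$ versus $\deg E\ge -1$ (using $r\ge 3$) finishes. Your route instead passes both dimensions through Serre duality to $h^1(E)$ and $h^1(-E-\sum P_i)$, assumes both vanish, and feeds the resulting Riemann--Roch inequalities into the single constraint $2g+r\le 2$ before splitting on $g$. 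The paper's version is shorter and never invokes Serre duality or a contradiction setup; your version is more symmetric and packages everything into one clean inequality, at the cost of a slightly longer detour. Both are perfectly valid, and your computation of $[(\alpha-1)D]\sim K_X-E$ (which the paper only uses implicitly via degrees when $g=0$) is carried out correctly.
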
 

\begin{proof} Let $R=R(X,D)$ and $g$ be the genus of $X$ and put 
$D= E + \sum_{i=1}^r \dfrac{p_i}{q_i} P_i$ as in (1.0.1).  By Lemma \ref{dual} (2), 
we have always
\[ [(\alpha + 1) D] = K_X + E + \sum_{i=1}^r P_i.\]
Since $\deg D>0$, we have $\deg E \ge 1-r $ and $\deg [ (\alpha + 1) D] 
\ge 2g-1$.  Hence if $g>0$, we have always $R_{\alpha +1} \ne 0$.\par
If $g=0$, we have 
\[\deg([ (\alpha - 1) D] ) = -2 - \deg E \quad {\rm and} \quad 
\deg([ (\alpha + 1) D] ) = - 2 + \deg E + r.\]
Hence if $\deg E\le -2$, we have $R_{\alpha -1} \ne 0$ and  if $\deg E \ge -1$, 
then $R_{\alpha +1}\ne 0$ since $r\ge 3$.
\end{proof}

Next we will show that if $p_g(R)=1$, then $\alpha\le 7$ and  that $\alpha$ is bounded if 
$p_g(R)$ is bounded. These result also appears in Saito's paper \cite{S1}, 
but we will show it by our method.
 We can easily distinguish $R$ with $p_g(R)=1$ and $\alpha\le 6$ from our table.

\begin{thm}\label{pg} (1)  If $p_g(R)=r$ is fixed, then $\alpha$ is bounded.\par 
(2) If $p_g(R)=1$, then $\alpha \le 7$. If $p_g(R)=1$ and $\alpha =7$, then 
the type of $R$ is either $(8,10,15; 40), (8,10,25; 50)$ or $(8,9,12; 36)$. 
\end{thm}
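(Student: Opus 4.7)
The plan is to prove~(1) by a straightforward inequality chain and~(2) by a structural reduction followed by a finite case analysis.

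For Part~(1), I would combine two consequences of~(1.5.1). First, Proposition~\ref{Fund}(1) gives $\dim R_\alpha = g$, so $p_g \ge 1 + g$, bounding $g \le p_g - 1$. Second, the powers $1, x, x^2, \dots, x^{\lfloor \alpha/a\rfloor}$ contribute $\lfloor \alpha/a\rfloor + 1$ distinct nonzero terms to $\sum_{n=0}^\alpha \dim R_n$, so $a > \alpha/p_g$. Lemma~\ref{degDle4ab}(1) gives $\deg D \le 4/(ab)$, while the uniform bound $\alpha \deg D \ge 1/42$ (worst case $g = 0$, where $\sum(q_i-1)/q_i - 2$ has minimum positive value $1/42$, attained at $(q_1,q_2,q_3)=(2,3,7)$) gives $ab \le 168\alpha$. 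Combining, $a^2 \le 168\alpha$ together with $a > \alpha/p_g$ yields $\alpha < 168\,p_g^{\,2}$.

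For Part~(2), suppose $p_g = 1$. Part~(1) already gives $g = 0$ and $\dim R_n = 0$ for $1 \le n \le \alpha$; combining with Theorem~\ref{alpha+1} (which rules out $R_{\alpha-1} \ne 0$) forces $a = \alpha + 1$. The next step is to extract the full structure of $D = E + \sum_{i=1}^r (p_i/q_i) P_i$ (normalized so $0 < p_i < q_i$). Applying Lemma~\ref{dual}(1) to the pair $(n, \alpha - n)$ for $1 \le n \le \alpha - 1$: both $\deg[nD] \le -1$ and $\deg[(\alpha-n)D] \le -1$ (because $\dim R_n = \dim R_{\alpha-n} = 0$ and $g = 0$), yet their sum is $2g-2 = -2$, forcing $\deg[nD] = -1$ throughout. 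This translates to $\deg E = -1$ together with the rigid floor-sum relations
\[
\sum_{i=1}^{r}\Big\lfloor \frac{np_i}{q_i}\Big\rfloor \;=\; n - 1 \qquad (1 \le n \le \alpha - 1).
\]
Meanwhile $\alpha D \sim K_X + \FRC(D)$ implies $\alpha p_i \equiv -1 \pmod{q_i}$, so $\gcd(\alpha, q_i) = 1$ and each $p_i$ is determined by $\alpha$ and $q_i$. Finally, Lemma~\ref{dual}(2) with $\deg E = -1$ gives $\dim R_{\alpha+1} = r - 2$, and since $R_{\alpha+1}$ is spanned by the degree-$a$ monomials ($x$ alone if $b > a$, both $x$ and $y$ if $b = a$), we obtain $r \in \{3, 4\}$.

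The main obstacle is the finite case analysis that remains. With $r \le 4$, the floor-sum constraints, the equality $\alpha \deg D = -2 + \sum(q_i - 1)/q_i$ from Proposition~\ref{Fund}(1), and the coprimality $\gcd(\alpha, q_i) = 1$ together narrow the admissible $(q_1, \ldots, q_r)$ for each $\alpha$ to a short list; each candidate is then matched back to $(a,b,c;h)$ via Proposition~\ref{Fund}(2) and Lemma~\ref{bd-of-c}(1). For $\alpha \ge 8$, one verifies that every candidate violates one of the constraints; for $\alpha = 7$ exactly three configurations survive, producing the types $(8,10,15;40)$, $(8,10,25;50)$, $(8,9,12;36)$. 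The sharp cut-off at $\alpha = 7$, rather than the generic $\alpha < 168$ coming from Part~(1), reflects the increasing rigidity of these arithmetic constraints as $\alpha$ grows.
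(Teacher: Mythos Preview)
Your structural reduction for Part~(2) --- $g = 0$, $a = \alpha + 1$, $\deg[nD] = -1$ for $1 \le n \le \alpha - 1$, $\deg E = -1$, $r \in \{3,4\}$ --- matches the paper exactly, and Part~(1) is the paper's argument made explicit (the paper's printed inequality $a \le \alpha/r$ is a typo for $a \ge \alpha/r$, as the next sentence there confirms).

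Where you diverge is the case analysis. You propose to narrow the $(q_i)$ via the floor-sum relations $\sum_i \lfloor np_i/q_i\rfloor = n-1$ and then check candidates; but as stated this is a per-$\alpha$ procedure and does not by itself yield a uniform cutoff at $\alpha = 7$. The paper's mechanism is sharper: since $a = \alpha+1$, every nonzero homogeneous element of degree $\le 2\alpha+1$ lies in a minimal generating set, so $\sum_{n=\alpha+1}^{2\alpha+1}\dim R_n \le 3$. Combined with the observation that for $1 \le n \le \alpha-1$ one has $R_{\alpha+n} \ne 0$ if and only if no $q_i$ divides $n$ (from $(\alpha+n)D \sim K_X + \FRC(D) + nD$), this forces all but at most three integers in $\{1,\dots,\alpha-1\}$ to be multiples of some $q_i$. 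With $r = 3$ and $\gcd(q_i,\alpha)=1$, a short parity argument then rules out even $\alpha \ge 8$ and pins down $(a,b) \in \{(\alpha+1,\alpha+2),(\alpha+1,\alpha+3)\}$ for odd $\alpha \ge 7$, after which Lemma~\ref{bd-of-c}(1) finishes. You also do not handle $r = 4$ separately; the paper disposes of it via a direct degree comparison (Lemma~\ref{r=3}) showing $\alpha \le 2$ in that case.
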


\begin{proof} (1)  If $p_g(R)=r$, then $a\le \alpha/r$ by formula (1.5.1). 
Hence if $\alpha$ tends to $\infty$, then   $\deg D \le c \alpha^{-2}$ for some constant 
$c$ by Proposition 1.1. While we have shown in Lemma 2.2 to 2.4 that $\deg D \ge c' \alpha^{-1}$ 
for some constant $c'$. \par
(2) Put $R=R(X,D)$ and $g$ be the genus of $X$.  If $g>0$, 
then we have $\dim R_{\alpha} =g$ and hence we have always 
$p_g(R) \ge g+1 \ge 2$.\par
Now we assume $g=0$ and $p_g(R)=1$ and as always, we write 
$D = E + \sum_{i=1}^r \dfrac{p_i}{q_i} P_i$. 
By formula (1.5.1), we have $R_n=0$ for $1\le n \le \alpha$. 
Hence by Theorem \ref{alpha+1}, $a= \alpha+1$. 
Since $R_n\ne 0$ if and only if $\deg [nD] \ge 0$, and 
\[ \leqno{(\ref{pg}.1)}\quad [nD] +[(\alpha -n)D ] = [\alpha D] = -2\quad \mbox{for $1\le n \le \alpha -1$}, \]  
we have 
\[\leqno{(\ref{pg}.2)}\quad \deg [nD] = -1 \quad \mbox{for $1\le n \le \alpha -1$.}\]   
In particular, if $\alpha \ge 2$ and $p_g(R)=1$, then $\deg E = -1$.  

Also, since $[(\alpha +1)D] = -2 + \deg E + r$ and $\dim R_{\alpha +1}\le 2$,
we get $r\le 4$.  If $r =4$, then $\alpha \le 2$ by the following Lemma \ref{r=3}. 
So, we assume $r=3$ in the following and assume $q_1\le q_2\le q_3$. \par
Since $a=\alpha +1$ in this case, every element of degree $\le 2 \alpha + 1$ is a 
member of minimal generating set of $R$. Hence we must have  
\[\leqno{(\ref{pg}.3)}\quad \sum_{n=\alpha +1}^{2\alpha +1} \dim R_n \le 3.\]
On the other hand, since $\deg [nD] = -1$ for $1\le n \le \alpha -1$ and 
$(\alpha + n)D \sim K_X + \FRC(D) + nD$   
for every $n$, $1\le n\le \alpha - 1$, we have  
\[\leqno{(\ref{pg}.4)}\quad R_{\alpha +n}\ne 0 \LLR \mbox{no $q_i$  divides } n.\]

Thus, we conclude that every $n, 1\le n \le \alpha-1$, except at most $3$ is a multiple of some 
$q_i$.  This excludes the possibility $\alpha =6$ since, every $q_i$ should be relatively prime 
to $6$ and $n=1,2,3,4$ are not multiple of any $q_i$. \par
Next, we assume that $\alpha$ is even and $\ge 8$.  Then since every $q_i$ should be odd, 
we should have $(a,b,c) = (\alpha +1, \alpha +2, \alpha +4)$ by  (\ref{pg}.4). 
But this contradicts the condition of \ref{bd-of-c} (2) and hence this case does not occur. 
\par
Now, we assume that $\alpha$ is odd and $\alpha\ge 7$. 
To proceed further, we note that if $q_1=2$ and $q_2=3$, then $R_6 \ne 0$.
Hence we  have either 
$R_{\alpha +2} \ne 0$ or $R_{\alpha +3}\ne 0$. 
\par
Thus, we have either 
\[\leqno{(\ref{pg}.5)}\quad (a,b,c;h) = (\alpha +1, \alpha +2, c; c+3\alpha+3)\quad {\rm or}\]
\[\leqno{(\ref{pg}.6)}\quad (a,b,c;h) = (\alpha +1, \alpha +3, c; c+3\alpha+4)\]

Now we check the condition of \ref{bd-of-c} (1). In case (2.8.5), $c$ should divide  
either $3\alpha +3, 2\alpha+2$ or $2\alpha +1$. Then we should have $(a,b,c;h) = 
(\alpha +1, \alpha +2, 2\alpha +1; 5\alpha+4)$ or 
$(\alpha +1, \alpha +2, (3\alpha +3)/2; 9(\alpha+1)/2+1)$.  
in the former case,  $\alpha +2$ must divide 
either $5\alpha+4, 3\alpha+3$ or $4\alpha+3$ and we can see such cases do not occur. in the latter case, $\alpha+2$ should divide $9(\alpha+1)/2)$, the only solution is 
 $\alpha =7$, then $(a,b,c;h)= (8, 9,c; c+24)$ and only possibility is and

\[(a,b,c; h) = (8,9,12; 36). \quad D = \dfrac{2}{3} P_1 + \dfrac{1}{4} P_2 + \dfrac{1}{8}P_3 -Q .\]

In case (2.8.6), $c$ should divide either $3\alpha +4, 2\alpha+1$ or $2\alpha +3$. 
Then we should have $(a,b,c;h) = 
(\alpha +1, \alpha +3, 2\alpha +1; 5\alpha+5), 
(\alpha +1, \alpha +3, 2\alpha +3; 5\alpha+7)$ or 
$(\alpha +1, \alpha +3, 3\alpha +4; 6\alpha+8)$.  

As in the previous case, since we have assumed $\alpha\ge 7$, the only solutions are  

\[ (8,10,15; 40)\quad D = \dfrac{1}{2}P_1 + \dfrac{2}{5} P_2 + \dfrac{2}{15}P_3 -Q,\]   
\[ (8,10,25; 50)\quad D = \dfrac{1}{2}P_1 + \dfrac{2}{5} P_2 + \dfrac{1}{8}P_3 -Q.\]     
\end{proof}

\begin{lem}\label{r=3} Let $R=R(X,D)$ be as in Theorem \ref{pg}, with $g=0$ and $r=4$.
Then if $p_g(R)\le 1$, then $\alpha \le 2$.
\end{lem}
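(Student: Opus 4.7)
The idea is to adapt the argument in the proof of Theorem \ref{pg} and exploit the extra rigidity of $r=4$ to pin down $(a,b,c;h)$ so sharply that any $\alpha\ge 3$ fails a monomial-count check. Since $p_g(R)\ge\dim R_0=1$, the hypothesis $p_g(R)\le 1$ forces $p_g(R)=1$, and the opening of the proof of Theorem \ref{pg} applies verbatim: I will have $a=\alpha+1$ and, for $\alpha\ge 2$, $\deg E=-1$. Lemma \ref{dual}(2) with $r=4$ then gives $\deg[(\alpha+1)D]=-2+\deg E+4=1$, so $\dim R_{\alpha+1}=2$, forcing $a=b=\alpha+1$. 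Thus the hypersurface $R=k[u,v,w]/(f)$ has exactly one further generator $w$, of some degree $c$.

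The plan is then to pin down $c$ by computing $\dim R_{2\alpha}$. A case-split on whether $q_i\mid m$ (using $\alpha p_i\equiv q_i-1\pmod{q_i}$) will give the identity
\[
\bigl\lfloor(\alpha+m)\,p_i/q_i\bigr\rfloor \;=\; \bigl\lfloor \alpha p_i/q_i\bigr\rfloor + \bigl\lfloor m\,p_i/q_i\bigr\rfloor + \bigl[\,q_i\nmid m\,\bigr].
\]
The same congruence will rule out $q_i\mid\alpha$ for any $i$, since otherwise $0\equiv -1\pmod{q_i}$ would force $q_i=1$. Applying the identity with $m=\alpha$ then yields $\deg[2\alpha D]=2\deg[\alpha D]+r=-4+4=0$, so $\dim R_{2\alpha}=1$. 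A nonzero element of $R_{2\alpha}$ must be a monomial in $u,v,w$ of degrees $(\alpha+1,\alpha+1,c)$; the only solution of $i(\alpha+1)+j(\alpha+1)+kc=2\alpha$ with $c\ge\alpha+1$ is $(i,j,k)=(0,0,1)$ with $c=2\alpha$. Hence $c=2\alpha$ is forced.

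Finally, supposing $\alpha\ge 3$, the normalization $\gcd(a,b,c)=1$ becomes $\gcd(\alpha+1,2\alpha)=\gcd(\alpha+1,2)=1$, which forces $\alpha$ even. Then $2\mid\alpha$, so no $q_i$ equals $2$, and $S_2:=|\{i:q_i\mid 2\}|=0$. Re-applying the identity with $m=2$ (valid because $2\le\alpha-1$, so $\deg[2D]=-1$ by the analogue of $(\ref{pg}.2)$), I will get $\deg[(\alpha+2)D]=-2-1+(4-S_2)=1$, so $\dim R_{\alpha+2}=2$. However, no monomial in $u,v,w$ of degrees $(\alpha+1,\alpha+1,2\alpha)$ has degree $\alpha+2$ when $\alpha\ge 3$: the case $k=0$ leaves $(i+j)(\alpha+1)=\alpha+2$, impossible, while $k\ge 1$ forces degree $\ge 2\alpha>\alpha+2$. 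Hence $\dim R_{\alpha+2}=0$, contradicting the DPD value, so $\alpha\le 2$. The main subtlety is the integer-part identity together with the observation $q_i\nmid\alpha$; once those are in place, pinning down $c=2\alpha$ and the final monomial count are routine.
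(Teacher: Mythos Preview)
Your argument is correct. The integer-part identity you isolate (a consequence of $\alpha p_i\equiv -1\pmod{q_i}$) is exactly the right tool, and the chain
\[
\dim R_{2\alpha}=1\ \Rightarrow\ c=2\alpha\ \Rightarrow\ \gcd(\alpha+1,2\alpha)=1\ \Rightarrow\ \alpha\text{ even}\ \Rightarrow\ \text{no }q_i=2\ \Rightarrow\ \dim R_{\alpha+2}=2,
\]
contradicting the empty monomial count in degree $\alpha+2$, is airtight. (Note that $h=3\alpha+2+c>2\alpha$ and $h>\alpha+2$ hold automatically, so $R_n=k[u,v,w]_n$ in the degrees you use.)

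This is a genuinely different route from the paper's. The paper argues by comparing two estimates for $\deg D$: from $\deg[2D]=-1$ it deduces that at most one $q_i$ equals $2$, so $\deg\FRC(D)\ge\tfrac{1}{2}+3\cdot\tfrac{2}{3}=\tfrac{5}{2}$, which bounds $\alpha\,\deg D$ from below; on the other side the type $(\alpha+1,\alpha+1,c;h)$ bounds $\deg D$ from above, and for $\alpha\ge 3$ these are declared incompatible. Your approach never touches a degree inequality: it pins down $c$ \emph{exactly} via $\dim R_{2\alpha}$, then extracts a parity obstruction from the standing hypothesis $(a,b,c)=1$, and finally produces a clean clash between the DPD and hypersurface descriptions of $R_{\alpha+2}$. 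What your method buys is that every step is an equality, so nothing is left implicit; the paper's method is in principle quicker but states the value of $c$ and the final comparison rather tersely.
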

\begin{proof} If $\alpha \ge 3$, then by  formula (\ref{pg}.2), 
$\deg E=-1$ and $\deg [2D] =-1$. Hence the number of $i$ with $q_i = 2$ is at most $1$ 
and $\deg \alpha D \ge \dfrac{1}{2} + 3\cdot \dfrac{2}{3} = \dfrac{5}{2}$. 
On the other hand, since the type of $R$ is $(\alpha +1,\alpha +1, \alpha +2: 4\alpha +4)$,
$\deg D= \dfrac{4}{(\alpha +1)(\alpha +2)}$, which is smaller than $\dfrac{5}{4\alpha}$.
A contradiction !   
\end{proof}

\section{The classification of the hypersurfaces with $a(R)\le 6$.} 

Now let us begin the classification of normal graded surface 
$R=k[x,y,z]/(f)$ with $a(R)=\alpha \le 6$.
\vskip 0.8cm
\centerline{\large\bf The case  $\alpha=1$}
\vskip 0.8cm

Henceforce, we put $D=K_X +\sum_{i=1}^r \dfrac{q_i-1}{q_i} P_i.$
From \ref{Fund} (3), the maximal value of $\deg D$ is taken when 
$a=b=c=1$ and  $\deg D=4$ in that case. 

\vskip 0.5cm
{\bf Case 1 - A. The case $g>0$.} 
\vskip 0.3cm
Assume that $g\ge 1$. Since $\deg(D)\le 4$, and $\deg D\ge \deg K_X=2g-2$, 
$g\le 3$ and if $g=3$, $D=K_X$. We list the cases by giving the form of 
$D$ and $(a,b,c;h)$. We can easily deduce the general form of the equation 
$f$ from this data. Also, if $f$ with the given weight has an isolated 
singularity, then $k[u,v,w]/(f)\cong R(X,D)$, where  $D$ is a divisor of 
given form. 
\vskip 0.2cm
{\bf (1-A-1)} \quad $g=3,D=K_X;\quad (1,1,1;4).$
\vskip 0.2cm
Next, consider the case $g=2$.  Note that $\dim R_1=\dim H^0(K_X)=g=2$, 
we have $a=b=1$ and $\deg D= 1+\dfrac{3}{c}\le \dfrac{5}{2}\quad (c\ge2)$. 
 Since, either 
$\deg D=2, D=K_X$ or $\deg D\ge \dfrac{5}{2}$, we have 2 cases.
\vskip 0.2cm
{\bf (1-A-2)} \quad $g=2,D=K_X;\quad (1,1,3;6).$

{\bf (1-A-3)} \quad $g=2,D=K_X+\dfrac{1}{2}P;\quad (1,1,2;5).$
\vskip 0.2cm
Next, assume $g=1$.  In this case, $a=1$, $2\le b\le c$ and the maximal 
value of $\deg D$ is $\dfrac{3}{2}$.  Since on the other hand, 
$\deg D\ge \dfrac{r}{2}$ and thus $r\le 3$ and if $r=3$, 
$D=\dfrac{1}{2}(P_1+P_2+P_3)$. 
\vskip 0.2cm
{\bf (1-A-4)} \quad $g=1,D=\dfrac{1}{2}(P_1+P_2+P_3);\quad (1,2,2;6).$
\vskip 0.2cm 
Also, since $\dim R_2=r$, if $r=2$, then $a=1,b=2,c\ge 3$, $\deg(D)\le 
\dfrac{7}{6}$. 
\vskip 0.2cm
{\bf (1-A-5)} \quad $g=1,D=\dfrac{1}{2}(P_1+P_2);\quad (1,2,4;8).$

{\bf (1-A-6)} \quad $g=1,D=\dfrac{1}{2}P_1+\dfrac{2}{3} P_2;\quad (1,2,3;7).$
\vskip 0.2cm

If $g=1$ and $D=\dfrac{q-1}q P$, we have $q-1$ new generators in degrees 
$1,3,\ldots, q$. Hence $q\le 4$.  
\vskip 0.2cm
{\bf (1-A-7)} \quad $g=1,D=\dfrac{1}{2}P;\quad (1,4,6;12).$

{\bf (1-A-8)} \quad $g=1,D=\dfrac{2}{3}P;\quad (1,3,5;10).$

{\bf (1-A-9)} \quad $g=1,D=\dfrac{3}{4}P;\quad (1,3,4;9).$
\vskip 0.2cm
We have $9$ types when $g\ge 1$. 
\vskip 0.5cm
{\bf Case 1- B. The case $g=0$ and $r\ge 4$.} 
\vskip 0.3cm
 Since $\deg(K_X)=-2$ and $\deg D>0$, 
we have $r\ge 3$.  On the other hand, since $R_1=H^0(K_X)=0$, 
$a\ge 2, c\ge  3$, we have $\deg D\le \dfrac{2}{3}<1$. 
Since $\deg D\ge -2+r/2$, we have $r\le 5$. 
\vskip 0.2cm
Now, since $\deg [ 2D ] =r-4$, $\dim R_2=2,1$, respectively, if 
$r=5,4$.  \par

Thus if $r=5$, then $a=b=2$ and $c\ge 3$. 
Since $3.\dfrac{1}{2}+2.\dfrac{2}{3}-2 = \dfrac{5}{6}> \dfrac{2}{3}$, 
the only possible cases for $(q_1,\ldots, q_5)$ are $(2,2,2,2,2)$ and 
$(2,2,2,2,3)$. 

\vskip 0.2cm

{\bf (1-B-1)} \quad $D=K_X+\dfrac{1}{2}(P_1+P_2+\ldots+P_5);\quad (2,2,5;10).$

{\bf (1-B-2)} \quad $D=K_X+\dfrac{1}{2}(P_1+P_2+P_3+P_4)+\dfrac{2}{3}P_5;
\quad (2,2,3;8).$
\vskip 0.2cm
Henceforce we assume $r=4$ and express $D$ by $(q_1,q_2,q_3,q_4)$ and 
{\bf we always assume} $q_1\le q_2\le q_3\le q_4$. 
In this case, $a=2$ and $3\le b\le c$.  Hence $\deg D\le \dfrac{1}{2}$. 
Since $4.\dfrac{2}{3}-2>\dfrac{1}{2}$, $q_1=2$ and $q_4\ge 3$. \par

Let $s$ be the number of $q_i>2$ ($1\le s\le 3$). Then  
since $\deg [3D] = -6 + 8 -s$, $\dim R_3=0,1,2$ 
 when $s=1,2,3$, respectively. \par

If $s=3$, $\dim R_2+\dim R_3=3$ and we must have $(a,b,c;h)=(2,3,3;9)$. 
\vskip 0.2cm
{\bf (1-B-3)} \quad $D=
K_X+\dfrac{1}{2}P_1+\dfrac{2}{3}(P_2+P_3+P_4);\quad (2,3,3;9).$
\vskip 0.2cm
If $s=2$, $a=2,b=3$ and $c\ge 4$.  Hence $\deg D = \dfrac{1}{6}+\dfrac{1}{c}
\le \dfrac{5}{12}$. Also, since $-2+(\dfrac{1}{2}+\dfrac{1}{2}
+\dfrac{2}{3}+\dfrac{3}{4})=\dfrac{5}{12}$, we have $2$ types.

\vskip 0.2cm
{\bf (1-B-4)} \quad $D=
K_X+\dfrac{1}{2}(P_1+P_2)+\dfrac{2}{3}(P_3+P_4);\quad (2,3,6;12).$

{\bf (1-B-5)} \quad $D=
K_X+\dfrac{1}{2}(P_1+P_2)+\dfrac{2}{3}P_3+\dfrac{3}{4}P_4;
\quad (2,3,4;10).$
\vskip 0.2cm
Now we trat the case $(2,2,2,q)$, $q\ge 3$.  In this case, $R_3=0$ and 
$\dim R_4=1$ or $2$ according to $q=3$ or $q\ge 4$. In the latter case,
$\dim R_5=0$ or $1$ according to $q=4$ or $q\ge 5$. Hence, if $q\ge 5$, 
we have already $3$ generators of $R$. 
\vskip 0.2cm
{\bf (1-B-6)} \quad $D=
K_X+\dfrac{1}{2}(P_1+P_2+P_3)+\dfrac{4}{5}P_4
;\quad (2,4,5;12).$

{\bf (1-B-7)} \quad $D=
K_X+\dfrac{1}{2}(P_1+P_2+P_3)+\dfrac{3}{4}P_4
;\quad (2,4,7;14).$

{\bf (1-B-8)} \quad $D=
K_X+\dfrac{1}{2}(P_1+P_2+P_3)+\dfrac{2}{3}P_4
;\quad (2,6,9;18).$
\vskip 0.2cm
We have 8 types in this case.
\vskip 0.5cm
{\bf Case 1 - C. The case $g=0$ and $r=3$.}
\vskip 0.3cm
We have to determine $(q_1,q_2,q_3)$. In this case, 
$ R_1= R_2=0$ and $\dim R_3=1$ or $0$ according to 
$q_1=2$ or $q_1\ge 3$.  

\vskip 0.2cm
{\bf Case 1. $q_1\ge 3$.}

In this case, $a=3$ and $4\le b\le c$.  Hence $\deg D\le \dfrac{1}{4}$. 
Hence either $q_1=3$ or $q_1=q_2=q_3=4$. 

\vskip 0.2cm
{\bf (1-C-1)} \quad $D=
K_X+\dfrac{3}{4}(P_1+P_2+P_3);\quad (3,4,4;12).$
\vskip 0.2cm
{\bf Henceforce we assume $q_1=3$. }

$R_4\ne 0$ if and only if $q_2\ge 4$.  In this case, 
$a=3,b=4$ and $c\ge 5$.  Hence $\deg D\le \dfrac{13}{60}
=\dfrac{2}{3}+\dfrac{3}{4}+\dfrac{4}{5}-2$. Hence we have only 2 
possibilities;
\vskip 0.2cm
{\bf (1-C-2)} \quad $D=
K_X+\dfrac{2}{3}P_1+\dfrac{3}{4}P_2+\dfrac{4}{5}P_3;\quad (3,4,5;13).$

{\bf (1-C-3)} \quad $D=
K_X+\dfrac{2}{3}P_1+\dfrac{3}{4}(P_2+P_3); \quad (3,4,8;16).$
\vskip 0.2cm

Next, assume $q_1=q_2=3$.  Hence $\deg D= \dfrac{q_3-1}{q_3}-\dfrac{2}{3}$.
On the other hand, since 
 $R_4=0$,  $a=3$, $b\ge 5$ and $c\ge 6$ and $\deg D \le \dfrac{1}{6}$. 
This implies $q_3\le 6$. 

\vskip 0.2cm
{\bf (1-C-4)} \quad $D=
K_X+\dfrac{2}{3}(P_1+P_2)+\dfrac{5}{6}P_3;\quad (3,5,6;15).$

{\bf (1-C-5)} \quad $D=
K_X+\dfrac{2}{3}(P_1+P_2)+\dfrac{4}{5}P_3;\quad (3,5,9;18).$

{\bf (1-C-6)} \quad $D=
K_X+\dfrac{2}{3}(P_1+P_2)+\dfrac{3}{4}P_3;\quad (3,8,12;24).$
\vskip 0.2cm

This completes the case $q_1=3$.

\vskip 0.2cm

{\bf Case 2. $q_1=2$.}

\vskip 0.2cm
In this case, $a\ge 4$ and $R_4\ne 0$ if and only if $q_2\ge 4$. 

\vskip 0.2cm
{\bf First, we consider the case $q_1=2$ and $q_2=3$ ($q_3\ge 7$). }

In this case, $\deg [4D]=-1=\deg [5D]=\deg [7D], \deg [6D]=0$. Hence 
$a=6$ and $b\ge 8$. Hence $\deg D\le \dfrac{1}{18}=\dfrac{8}{9}-\dfrac{5}{6}$.   This shows that $7\le q_3\le 9$ and actually these cases gives the 
hypersurfaces.  
\vskip 0.2cm
{\bf (1-C-7)} \quad $D=
K_X+\dfrac{1}{2}P_1+\dfrac{2}{3}P_2+\dfrac{6}{7}P_3;\quad (6,14,21;42).$

{\bf (1-C-8)} \quad $D=
K_X+\dfrac{1}{2}P_1+\dfrac{2}{3}P_2+\dfrac{7}{8}P_3;\quad (6,8,15;30).$

{\bf (1-C-9)} \quad $D=
K_X+\dfrac{1}{2}P_1+\dfrac{2}{3}P_2+\dfrac{8}{9}P_3;\quad (6,8,9;24).$

\vskip 0.2cm
{\bf Next, we consider the case $q_1=2$ and $q_2\ge 4$. }

In this case, $\deg [4D]=0$ and $a=4,b\ge 5, c\ge 6$. 
Hence  $\deg D\le \dfrac{2}{15}=(\dfrac{1}{2}+\dfrac{4}{5}+\dfrac{5}{6})-2$.
Hence $q_2\le 5$ and if $q_2=5$, the possibility is the following 2 cases.

\vskip 0.2cm
{\bf (1-C-10)} \quad $D=
K_X+\dfrac{1}{2}P_1+\dfrac{4}{5}P_2+\dfrac{5}{6}P_3;\quad (4,5,6;16).$

{\bf (1-C-11)} \quad $D=
K_X+\dfrac{1}{2}P_1+\dfrac{4}{5}(P_2+P_3);\quad (4,5,10;20).$

\vskip 0.2cm
{\bf The remaining case is $q_1=2,q_2=4\; (q_3\ge 5)$. }

Since $\dim R_4=1$ and $R_5=0$ and hence $a=4,b\ge 6,c\ge 7$ and 
$\deg D= \dfrac{q_3-1}{q_3}-\dfrac{3}{4}\le \dfrac{3}{28}$.  Hence 
$5\le q_3\le 7$ and actually these cases give hypersurfaces. 

This finishes the classification !
\vskip 0.2cm
{\bf (1-C-12)} \quad $D=
K_X+\dfrac{1}{2}P_1+\dfrac{3}{4}P_2+\dfrac{4}{5}P_3;\quad (4,10,15;30).$

{\bf (1-C-13)} \quad $D=
K_X+\dfrac{1}{2}P_1+\dfrac{3}{4}P_2+\dfrac{5}{6}P_3;\quad (4,6,11;22).$

{\bf (1-C-14)} \quad $D=
K_X+\dfrac{1}{2}P_1+\dfrac{3}{4}P_2+\dfrac{6}{7}P_3;\quad (4,6,7;18).$

\vskip 0.8cm
\centerline{\large\bf The case  $\alpha=2$}
\vskip 0.8cm

We   assume that $R=R(X,D)\cong k[u,v,w]/(f)$ 
with 
$$\deg(u,v,w;f)=(a,b,c;h); \quad h=a+b+c+2.$$

We always assume $(a,b,c)=1$ and $a\le b\le c$.  Since $R$ is Gorenstein with $a(R)=2$, 
$2D$ is linearly equivalent to $K_X+\FRC(D)$.  Hence we may assume that

$$D=E+\sum_{i=1}^r \dfrac{q_i-1}{2q_i}P_i,$$
 where $2E \sim K_X$ and every $q_i$ is odd.  

We divide the cases according to (A) $a=b=1$, 
(B) $a=1,b\ge 2$, and (C) $a>1$.    
\vskip 0.3cm
{\bf Case 2 - A.} When  $a=b=1$, we have the following $4$ types.  
\vskip 0.2cm
\noindent  
{\bf (2-A-1)} \quad $g=6, D=E$ with $2E\sim K_X;   
\quad (1,1,1;5).$
\vskip 0.2cm
\noindent  
{\bf (2-A-2)} \quad $g=4, D=E$ with $2E\sim K_X;    
\quad (1,1,2;6).$
\vskip 0.2cm
\noindent  
{\bf (2-A-3)} \quad $g=3, D=E+\frac{1}{3}P$ with $2E\sim K_X;   
\quad (1,1,3;7).$
\vskip 0.2cm
\noindent  
{\bf (2-A-4)} \quad $g=3, D=E$ with $2E\sim K_X;   
\quad (1,1,4;8).$
\vskip 0.3cm
{\bf Case 2 - B.}  $a=1$ and $b\ge 2$
\vskip 0.2cm
If $b=2$, we have $g=\dim R_2=2$ and we have the following types.
\vskip 0.2cm
\noindent  
{\bf (2-B-1)} \quad $g=2, D=E+\dfrac{1}{3}P$ with $2E\sim K_X;  
\quad (1,2,3;8).$
\vskip 0.2cm
\noindent  
{\bf (2-B-2)} \quad $g=2, D=E$ with $2E\sim K_X;    
\quad (1,2,5;10).$
\vskip 0.2cm
If $b\ge 3$, then $g=1$.  Since $2E\sim 0$ and $R_1\ne 0$,  
we have $E=0$ and $\deg [3D]=r$. Hence $r\le 3$ and $\deg D
\le \dfrac{9}{1\cdot 3\cdot 3}=1.$ 
If $r\ge 2$, then $a=1$ and $b=3$. We have the following cases.
\vskip 0.2cm
\noindent  
{\bf (2-B-3)} \quad $g=1, D=\dfrac{1}{3}(P_1+P_2+P_3);  
\quad (1,3,3;9).$
\vskip 0.2cm
\noindent  
{\bf (2-B-4)} \quad $g=1, D=\dfrac{1}{3}P_1+\dfrac{2}{5}P_2;  
\quad (1,3,5;11).$
\vskip 0.2cm
\noindent  
{\bf (2-B-5)} \quad $g=1, D=\dfrac{1}{3}(P_1+P_2);  
\quad (1,3,6;12).$
\vskip 0.2cm
If $r=1$, then $b\ge 5$  and 
$\deg D \le \dfrac{15}{1\cdot 5\cdot 7}=\dfrac{3}{7}$. We have the 
following cases. 
\vskip 0.2cm
\noindent  
{\bf (2-B-6)} \quad $g=1, D=\dfrac{3}{7}P;  
\quad (1,5,7;15).$
\vskip 0.2cm
\noindent  
{\bf (2-B-7)} \quad $g=1, D=\dfrac{2}{5}P;  
\quad (1,5,8;16).$
\vskip 0.2cm
\noindent  
{\bf (2-B-8)} \quad $g=1, D=\dfrac{1}{3}P;  
\quad (1,6,9;18).$
\vskip 0.2cm
This finishes the case $a=1,b\ge 2$.
\vskip 0.2cm
{\bf Case 2 - C.}  $a\ge 2$.  
\vskip 0.2cm
This is equivalent to say that $R_1=H^0(X,\OO_X(D))=0$. If this 
is the case, we have 
$$\deg D \le \dfrac{9}{2\cdot 2\cdot 3}=\dfrac{3}{4}<1.$$

Since $\deg D\ge g-1$, $g=0$ or $1$ in this case. 
\vskip 0.2cm
First, assume $g=1$. Then $2E\sim 0$ and $\deg E=0$. Since $\deg [3D]
\ge 1$, $R_3\ne 0$. Hence $a=2$ and $b=3$ in this case.
We have the following cases.
\vskip 0.2cm
{\bf (2-C-1)} \quad $g=1, D=E+\dfrac{1}{3}P$ with $E\ne 0, 2E\sim 0
  ;\quad (2,3,7;14).$
\vskip 0.2cm
{\bf (2-C-2)} \quad $g=1, D=E+\dfrac{2}{5}P$ with $E\ne 0, 2E\sim 0  ;\quad (2,3,5;12).$
\vskip 0.2cm
Next, assume $g=0$.  Then since $2E\sim K_X$, $\deg E=-1$ and 
$\deg [3D]=-3+r\ge 0$, which implies $a=3$.  Since $\dim R_3\le 2$, 
$r=3$ or $4$. \par
If $\dim R_3=2$ and $r=4$, we have only one case. 
\vskip 0.2cm
{\bf (2-C-3)} \quad $g=0, D=E+\dfrac{1}{3}(P_1+\ldots +P_4)$ with 
 $\deg E=-1 ;\quad (3,3,4;12).$
\vskip 0.2cm
If $r=3$, then $a=3, b\ge 5$ and $\deg D\le 
\dfrac{15}{3\cdot 5\cdot 5}=\dfrac{1}{5}$. We have the following cases.
\vskip 0.2cm
{\bf (2-C-4)} \quad $g=0, D=E+\dfrac{2}{5}(P_1+P_2 +P_3)$ with 
 $\deg E=-1 ;\quad (3,5,5;15).$
\vskip 0.2cm
{\bf (2-C-5)} \quad $g=0, D=E+\dfrac{1}{3}P_1+\dfrac{2}{5}P_2 +
\dfrac{3}{7}P_3$ with 
 $\deg E=-1 ;\quad (3,5,7;17).$
\vskip 0.2cm
{\bf (2-C-6)} \quad $g=0, D=E+\dfrac{1}{3}P_1+\dfrac{2}{5}(P_2 +
P_3)$ with  $\deg E=-1 ;\quad (3,5,10;20).$
\vskip 0.2cm
{\bf (2-C-7)} \quad $g=0, D=E+\dfrac{1}{3}(P_1+P_2) +\dfrac{4}{9}P_3$
 with  $\deg E=-1 ;\quad (3,7,9;21).$
\vskip 0.2cm
{\bf (2-C-8)} \quad $g=0, D=E+\dfrac{1}{3}(P_1+P_2) +\dfrac{3}{7}P_3$
 with  $\deg E=-1 ;\quad (3,7,12;24).$
\vskip 0.2cm
{\bf (2-C-9)} \quad $g=0, D=E+\dfrac{1}{3}(P_1+P_2) +\dfrac{2}{5}P_3$
 with  $\deg E=-1 ;\quad (3,10,15;30).$
\vskip 0.2cm
This finishes the case $a(R)=2$.

\vskip 0.8cm
\centerline{\large\bf The case  $\alpha=3$}
\vskip 0.8cm

We  assume that $R=R(X,D)\cong k[u,v,w]/(f)$ 
with 
$$\deg(u,v,w;f)=(a,b,c;h); \quad h=a+b+c+3.$$

 Since $R$ is Gorenstein with $a(R)=3$, 
$3D$ is linearly equivalent to $K_X+\FRC(D)$.  By \ref{Fund} (1),
 we may assume that

$$\leqno{(3.1.1)}\quad 
D=E+\sum_{i=1}^r{}_{q_i \equiv 1 (\mod 3)} \dfrac{q_i-1}{3q_i}P_i+
\sum_{i=1}^s {}_{q_i \equiv 2 (\mod 3)}\dfrac{2q_i-1}{3q_i}Q_i.$$

By \ref{Fund} (1), we have 
$$\leqno{(3.1.2)} \quad 
3E + \sum_{i=1}^s Q_i \sim K_X; \quad \deg E=\dfrac{2g-2-s}{3}.$$

Since $\dfrac{q_i-1}{3q_i}\ge \dfrac{1}{4}$ if $q_i \equiv 1 (\mod 3)$, 
$q_i>1$  and $\dfrac{2q_i-1}{3q_i}\ge \dfrac{1}{2}$ if $q_i \equiv 2 (\mod 3)$,
we have 
$$\leqno{(3.1.3)} \quad 
\deg D\ge \deg E+\dfrac{r}{4}+\dfrac{s}{2}=\dfrac{2g-2}{3}+
\dfrac{r}{4}+\dfrac{s}{6}.$$

\vskip 0.3cm
We divide the cases according to (A) $a=b=1$, 
(B) $a=1,b\ge 2$, (C) $a\ge 2$. 
\vskip 0.5cm
{\bf Case 3 - A.  $a=b=1$.}
\vskip 0.5cm
In this case, the type of $R$ is the form $(1,1,c; c+5)$. 
By Lemma \ref{bd-of-c}, $c\le 5$ and $c\ne 3$.   
So we have for cases $c=1,2,4,5$. We can calculate the genus $g$ by 
$g=\dim R_3$. 
\vskip 0.2cm\noindent
{\bf (3-A-1)} \quad $g=10$, $D=E$ with $\deg E=6$, $3E\sim K_X$ 
;\quad $(1,1,1;4).$ $X$ is not hyperelliptic.
\vskip 0.2cm\noindent
{\bf (3-A-2)} \quad $g=6,D=E+\dfrac{1}{2}Q$ with $\deg E =3, 3E+Q\sim K_X$; 
\quad $(1,1,2;7)$. \par
Since $g=6$ and $\deg D=\dfrac{7}{2}=\dfrac{10}{3}+\dfrac{1}{6}$, this is 
the only possible equation for $D$.
\vskip 0.2cm\noindent
{\bf (3-A-3)} \quad $g=4,D=E+\dfrac{1}{4}P$ with $\deg E=2, 3E\sim K_X$;\quad $(1,1,4;9).$
\vskip 0.2cm\noindent
{\bf (3-A-4)} \quad $g=4,D=E$ with $\deg E=2$, $3E\sim K_X;\quad (1,1,5;10).$
\vskip 0.5cm
{\bf Case 3 - B.  $a=1,b\ge 2$.}
\vskip 0.5cm
In this case, since $R_1=H^0(X,O_X(E))\ne 0$, we may assume $E\ge 0$.
Since $g=\dim R_3$, we have $g\le 3$. Also, 
$g\ge 2$ if and only if $b\le 3$ and $g=1$ otherwise. 
\par
If $g =3$, then $c\le 3$ and we have the following cases.
\vskip 0.2cm\noindent
{\bf (3-B-1)} \quad $g=3,D=\dfrac{1}{2}\sum_{i=1}^4Q_i$ with 
$\sum_{i=1}^4Q_i\sim K_X$;\quad $(1,2,2;8).$
\vskip 0.2cm\noindent
{\bf (3-B-2)} \quad $g=3,D=E+\dfrac{1}{2}Q$ with 
$\deg E=1$, $3E+Q\sim K_X;\quad (1,2,3;9).$
\vskip 0.2cm
If $g=2$, then $b\le 3$ and $c\ge 4$. Aso, by (3.1.2), 
$E=0, s=2$ and $Q_1+Q_2\sim K_X$.  Hence   
$\dim R_2 = h^0(K_X)=2$. Thus we have $b=2$. \par
Since $E=0$, $\deg [4D] = r + 2s \ge 4$ and $c=4$ if and only if $r>0$. 
\vskip 0.2cm\noindent
{\bf (3-B-3)} \quad $g=2,D=\dfrac{1}{4}P+\dfrac{1}{2}(Q_1+Q_2)$ 
with $Q_1+Q_2\sim K_X;
\quad (1,2,4;10).$\par
Now, since $E=0$ and $s=2$, we have $\deg D\ge 1$. 
On the other hand, since $a=1, b=2$, $deg D= \dfrac{c+6}{2c}$ and $\deg D\ge 1$ if and only if 
$c\le 6$.  Hence we have the following cases. 
\vskip 0.2cm\noindent
{\bf (3-B-4)} \quad $g=2,D=\dfrac{3}{5}Q_1+\dfrac{1}{2}Q_2;$ with $Q_1+Q_2
\sim K_X;\quad (1,2,5;11).$
\vskip 0.2cm\noindent
{\bf (3-B-5)} \quad $g=2,D=\dfrac{1}{2}(Q_1+Q_2)$ 
with $Q_1+Q_2\sim K_X;\quad (1,2,6;12).$
\vskip 0.2cm
In the remaining cases, $a=1$ and $b\ge 4$.  Then we have $g=1$ and by (3.1.2) 
we have $E=0$ and $s=0$. Also, since $\deg [4D] =r$, $b=4$ if and only if $r\ge 2$. 
If this is the case, we have the following $3$ cases.  
\vskip 0.2cm\noindent
{\bf (3-B-6)} \quad $g=1,D=\dfrac{1}{4}(P_1+P_2+P_3)
  ;\quad (1,4,4;12).$
\vskip 0.2cm\noindent
{\bf (3-B-7)} \quad $g=1,D=\dfrac{1}{4}P_1+\dfrac{2}{7}P_2
  ;\quad (1,4,7;15).$
  \vskip 0.2cm\noindent
{\bf (3-B-8)} \quad $g=1,D=\dfrac{1}{4}(P_1+P_2)
  ;\quad (1,4,8;16).$
\par
If $g=1, s=0$ and $r=1$, $b\ge 7$ and $c\ge 10$. Hence $\deg D
 \le \dfrac{21}{70}=\dfrac{3}{10}$.  
We have the following $3$ types.
\vskip 0.2cm\noindent
{\bf (3-B-9)} \quad $g=1,D=\dfrac{1}{4}P 
  ;\quad (1,8,12;24).$
\vskip 0.2cm\noindent
{\bf (3-B-10)} \quad $g=1,D=\dfrac{2}{7}P 
  ;\quad (1,7,11;22).$
\vskip 0.2cm\noindent
{\bf (3-B-11)} \quad $g=1,D=\dfrac{3}{10}P
  ;\quad (1,7,10;21).$
\vskip 0.5cm
{\bf Case 3 - C.  $a\ge 2$.}
\vskip 0.5cm
In this case, $\deg D \le \dfrac{10}{2\cdot 2\cdot 3}<1$.  \par
Since $g=\dim R_3$, $g\le 2$.  But since the hypersurfaces of type $(3,3,c; c+9)$ cannot 
be normal, $g\le 1$.  \par
If $g=1$, by (3.1.2), either $\deg E=0 =s$ or $\deg E= -1$ and $s=3$.  In the first case,
since $3E \sim 0, R_1=0, E\not\sim 0$ and  $R_2 =0$. Hence $\deg D \le  
\dfrac{15}{3\cdot 4\cdot 5}=\dfrac{1}{4}$ and we have the following case. 
\vskip 0.2cm\noindent
{\bf (3-C-1)} \quad $g=1, D=E+\dfrac{1}{4}P$, $3E\sim 0,E\not\sim 0 
  ;\quad (3,4,5;15).$
\vskip 0.2cm
In the latter case, since $\deg [2D] \ge 1$, we have $a=2, b=3$.  
Since $\deg D \le \dfrac{12}{2\cdot 3\cdot 4}= \dfrac{1}{2}$, the 
following case is the only possiblity.   
\vskip 0.2cm\noindent
{\bf (3-C-2)} \quad $g=1, D=E+\dfrac{1}{2}(P_1+P_2+P_3)$, $3E+P_1+P_2+P_3\sim 0 
 ;\quad (2,3,4;12).$
\vskip 0.2cm
Now, until the end of the case $\alpha =2$, we assume $g=0$. 
By (3.1.2), we have $\deg E = - \dfrac{s+2}{3}$ and $\deg [2D] = s + 2\deg E = 
\dfrac{s-4}{3}$.  \par
If $s=7$, then $a=b=2$ and $\deg D \ge \dfrac{1}{2}$, which implies $c\le 7$ and we have 
the following $2$ cases.    

\vskip 0.2cm\noindent
{\bf (3-C-3)} \quad $g=0, D=E+\dfrac{1}{2}(P_1+\ldots +P_6)+\dfrac{3}{5}P_7, \deg E=-3
  ;\quad (2,2,5;12).$
\vskip 0.2cm\noindent
{\bf (3-C-4)} \quad $g=0, D=E+\dfrac{1}{2}(P_1+\ldots +P_6+P_7), \deg E=-3
  ;\quad (2,2,7;14).$
\vskip 0.2cm
If $s=4$ and $\deg E = -2$, we have $a=2$ and since $\deg [4D] = r \le 1$,  
 $b=4$ if and only if $r=1$, otherwise, $b\ge 5$. 
 If $r=1$, then $\deg D \ge \dfrac{1}{4}$ and $a=2,b=4$ and $c\le 9$ and 
 we have the following $3$ cases. 
 \vskip 0.2cm\noindent
{\bf (3-C-5)} \quad $g=0, D=E+\dfrac{1}{4}P+\dfrac{3}{5}Q_1+\dfrac{1}{2}
(Q_2+Q_3+Q_4)$, $\deg E=-2
  ;\quad (2,4,5;14).$
\vskip 0.2cm\noindent
{\bf (3-C-6)} \quad $g=0, D=E+\dfrac{2}{7}P+\dfrac{1}{2}
(Q_1+\ldots+Q_4)$, $\deg E=-2
  ;\quad (2,4,7;16).$
\vskip 0.2cm\noindent
{\bf (3-C-7)} \quad$g=0, D=E+\dfrac{1}{4}P+\dfrac{1}{2}
(Q_1+\ldots+Q_4)$, $\deg E=-2
  ;\quad (2,4,9;18).$
\vskip 0.2cm
If $s=4$ and $r=0$, then $b\ge 5$ and we have $\dfrac{1}{10}\le \deg D \le 
\dfrac{15}{2\cdot 5\cdot 5}$, we have the following $6$ cases. 
\vskip 0.2cm\noindent
{\bf (3-C-8)} \quad $g=0, D=E+\dfrac{1}{2}Q_1+\dfrac{3}{5}(Q_2+Q_3+Q_4)$, $\deg E=-2
  ;\quad (2,5,5;15).$
\vskip 0.2cm\noindent
{\bf (3-C-9)} \quad $g=0, D=E+\dfrac{5}{8}Q_1+\dfrac{3}{5}Q_2+\dfrac{1}{2}
(Q_3+Q_4)$, $\deg E=-2
  ;\quad (2,5,8;18).$
\vskip 0.2cm\noindent
{\bf (3-C-10)} \quad $g=0, D=E+\dfrac{3}{5}(Q_1+Q_2)+\dfrac{1}{2}
(Q_3+Q_4)$, $\deg E=-2
  ;\quad (2,5,10;20).$
\vskip 0.2cm\noindent
{\bf (3-C-11)} \quad $g=0, D=E+\dfrac{7}{11}Q_1+\dfrac{1}{2}
(Q_2+Q_3+Q_4)$, $\deg E=-2
  ;\quad (2,8,11;24).$
\vskip 0.2cm\noindent
{\bf (3-C-12)} \quad $g=0, D=E+\dfrac{5}{8}Q_1+\dfrac{1}{2}
(Q_2+Q_3+Q_4)$, $\deg E=-2
  ;\quad (2,8,13;26).$
\vskip 0.2cm\noindent
{\bf (3-C-13)} \quad $g=0, D=E+\dfrac{3}{5}Q_1+\dfrac{1}{2}
(Q_2+Q_3+Q_4)$, $\deg E=-2
  ;\quad (2,10,15;30).$
 \vskip 0.2cm
If $s=1$,  since $\deg (K_X+\FRC(D))>0$, we must have $r\ge 2$. On the other 
hand, since $R_2=R_3=0$ and $\deg [4D] = r-2\le 1$, $r\le 3$.  
Hence $a\ge 4$ and $\deg D\le \dfrac{16}{4\cdot 4\cdot 5}=\dfrac{1}{5}$.
 If $r\ge 3$, then $\deg D\ge \dfrac{1}{4}$.  Hence $r=2$, $a=4,b\ge 5$.
 We have the following $7$ cases.  
\vskip 0.2cm\noindent
{\bf (3-C-14)} \quad $g=0, D=E+\dfrac{3}{5}Q+\dfrac{2}{7}P_1+\dfrac{1}{4}P_2$, 
$\deg E=-1
  ;\quad (4,5,7;19).$
\vskip 0.2cm\noindent
{\bf (3-C-15)} \quad $g=0, D=E+\dfrac{5}{8}Q+\dfrac{1}{4}(P_1+P_2)$, 
$\deg E=-1
  ;\quad (4,5,8;20).$
\vskip 0.2cm\noindent
{\bf (3-C-16)} \quad $g=0, D=E+\dfrac{3}{5}Q+\dfrac{1}{4}(P_1+P_2)$,
$\deg E=-1
  ;\quad (4,5,12;24).$
\vskip 0.2cm\noindent
{\bf (3-C-17)} \quad $g=0, D=E+\dfrac{1}{2}Q_1+\dfrac{2}{7}P_1+\dfrac{3}{10}P_2$, 
$\deg E=-1  ;\quad (4,7,10;24).$
\vskip 0.2cm\noindent
{\bf (3-C-18)} \quad $g=0, D=E+\dfrac{1}{2}Q_1+\dfrac{2}{7}(P_1+P_2)$, 
$\deg E=-1  ;\quad (4,7,14;28).$
\vskip 0.2cm\noindent
{\bf (3-C-19)} \quad $g=0, D=E+\dfrac{1}{2}Q_1+\dfrac{1}{4}P_1+\dfrac{3}{10}P_2$, 
$\deg E=-1  ;\quad (4,10.17;34).$
\vskip 0.2cm\noindent
{\bf (3-C-20)} \quad $g=0, D=E+\dfrac{1}{2}Q_1+\dfrac{1}{4}P_1+\dfrac{2}{7}P_2$, 
$\deg E=-1  ;\quad (4,14,21;42).$
\vskip 0.2cm
This finishes the case $a(R)=3$.

\vskip 0.8cm
\centerline{\large\bf The case  $\alpha=4$}
\vskip 0.8cm

We   assume that $R=R(X,D)\cong k[u,v,w]/(f)$ with 
$$\deg(u,v,w;f)=(a,b,c;h); \quad h=a+b+c+4.$$

Since $4D \sim K_X+\FRC(D)$,   we may assume that

$$\leqno{(4.1.1)}\quad 
D=E+\sum_{i=1}^r{}_{q_i \equiv 1 (\mod 4)} \dfrac{q_i-1}{4q_i}P_i+
\sum_{i=1}^s {}_{q_i \equiv 3 (\mod 4)}\dfrac{3q_i-1}{4q_i}Q_i,$$
where $E$ is an integral divisor on $X$. 

Since $4D\sim K_X+\FRC(D)$, we have 
$$\leqno{(4.1.2)} \quad 
4E + 2 \sum_{i=1}^s Q_i \sim K_X; \quad \deg E=\dfrac{g-1-s}{2}.$$

Since $\dfrac{q_i-1}{4q_i}\ge \dfrac{1}{5}$ if $q_i \equiv 1 (\mod 4)$, 
$q_i>1$  and $\dfrac{3q_i-1}{4q_i}\ge \dfrac{2}{3}$ if $q_i \equiv 3 (\mod 4)$,
we have 
$$\leqno{(4.1.3)}\quad 
\deg D\ge \deg E+\dfrac{r}{5}+\dfrac{2s}{3}=\dfrac{g-1}{2}+
\dfrac{r}{5}+\dfrac{s}{6}.$$

\vskip 0.3cm
We divide the cases according to (A) $a=b=1$, 
(B) $a=1,b\ge 2$, (C) $a\ge 2$. 

\vskip 0.5cm
{\bf Case A.  $a=b=1$.}
\vskip 0.5cm
In this case, the type of $R$ is the form $(1,1,c; c+6)$. 
By \ref{bd-of-c}, $c\le 6$ and we have the following cases.
\vskip 0.2cm\noindent
{\bf (4-A-1)} \quad $g=15$, $D=E$ with $\deg E=6$, $3E\sim K_X$ 
;\quad $(1,1,1;4).$ $X$ is not hyperelliptic.
\vskip 0.2cm\noindent
{\bf (4-A-2)} \quad $g=9,D=E$ with $\deg E=4, 4E\sim K_X$; 
\quad $(1,1,2;8)$. \par
\vskip 0.2cm\noindent
{\bf (4-A-3)} \quad $g=7,D=E$ with $\deg D=3, 3E\sim K_X ;\quad (1,1,3;9).$
\vskip 0.2cm\noindent
{\bf (4-A-4)} \quad $g=5,D=E+\dfrac{1}{5}$ with $\deg E=2$, $4E\sim K_X;
\quad (1,1,4;10).$
\vskip 0.2cm\noindent
{\bf (4-A-5)} \quad $g=5$, hyperelliptic, $D=E$ with $\deg D=2$, $4D\sim K_X;
\quad (1,1,6;12).$
\vskip 0.5cm
{\bf Case B.  $a=1,b\ge 2$.}
\vskip 0.5cm
In this case, since $R_1=H^0(X,O_X(E))\ne 0$, we may assume $E\ge 0$.
Since $g=\dim R_4$, $g\ge 2$ if and only if $b\le 4$ and $g=1$ otherwise. 
Checking the cases of type $(1,2,c; c+7)$ and $(1,3,c;c+8)$, 
we have the following types. 
\vskip 0.2cm\noindent
{\bf (4-B-1)} \quad $g=4,D=E+\dfrac{2}{3}(Q$ with $4E+2Q\sim K_X$; 
\quad $(1,2,3;10)$. 
\vskip 0.2cm\noindent
{\bf (4-B-2)} \quad $g=3,D=E+\dfrac{1}{5}Q$ with $4E\sim K_X$; 
\quad $(1,2,5;12)$. 
\vskip 0.2cm\noindent
{\bf (4-B-3)} \quad $g=3$, hyperelliptic; $D=E$ with $4E\sim K_X$; 
\quad $(1,2,7;14)$. 
\vskip 0.2cm\noindent
{\bf (4-B-4)} \quad $g=3,D=E$ with $4E\sim K_X$; \quad $(1,3,4;12)$.
\vskip 0.2cm\noindent
{\bf (4-B-5)} \quad $g=2,D=\dfrac{1}{5}P+\dfrac{2}{3}Q$ with $2Q\sim K_X$; 
\quad $(1,3,5;13)$. 
\vskip 0.2cm\noindent
{\bf (4-B-6)} \quad $g=2,D=\dfrac{5}{7}Q$ with $2Q\sim K_X$; 
\quad $(1,3,7;15)$. 
\vskip 0.2cm\noindent
{\bf (4-B-7)} \quad $g=2,D=\dfrac{2}{3}Q$ with $2Q\sim K_X$; 
\quad $(1,3,8;16)$. 
\vskip 0.2cm
These finishes the case $a=1$ and $2\le b\le 3$. 
We can also check that the type $(1,4,c;c+9)$ can not give a normal ring. 
Hence we may assume $b\ge 5$ and hence $g=\dim R_4=1$. 
Also, we have  $\deg D \le \dfrac{15}{5\cdot 5}=\dfrac{3}{5}$. Hence 
$E=0$ and $s=0$.  Also, $b=5$  if  and only if $r\ge 2$.  Since 
$\dim R_5\le 3$, we have also $r\le 3$ and if $r=3$, then $\deg D\ge 
\dfrac{3}{5}$. 
\vskip 0.2cm\noindent
{\bf (4-B-8)} \quad $g=1,D=\dfrac{1}{5}(P_1+P_2+P_3); \quad (1,5,5;15)$.
\vskip 0.2cm
If $r=2$, $b=5$ and $c\ge 9$.  Hence $\deg D \le \dfrac{16}{45}=\dfrac{1}{5}+
\dfrac{2}{9}$.
\vskip 0.2cm\noindent
{\bf (4-B-9)} \quad $g=1,D=\dfrac{1}{5}P_1+\dfrac{2}{9}P_2; \quad (1,5,9;19)$. 
\vskip 0.2cm\noindent
{\bf (4-B-10)} \quad $g=1,D=\dfrac{1}{5}(P_1+P_2); \quad (1,5,10;20)$. 
\vskip 0.2cm
If $r=1$, $b\ge 9$ and $c\ge 13$.  Hence $\deg D \le \dfrac{27}{9\cdot 13}=
\dfrac{3}{13}$. Thus we have only the following possibilities.
\vskip 0.2cm\noindent
{\bf (4-B-11)} \quad $g=1,D=\dfrac{1}{5}P; \quad (1,10,15;30)$. 
\vskip 0.2cm\noindent
{\bf (4-B-12)} \quad $g=1,D=\dfrac{2}{9}P; \quad (1,9,14;28)$. 
\vskip 0.2cm\noindent
{\bf (4-B-13)} \quad $g=1,D=\dfrac{3}{13}P; \quad (1,9,13;27)$. 
\vskip 0.2cm\noindent
This finishes the case $R_1\ne 0$.
\vskip 0.5cm
{\bf Case C.  $a\ge 2$.}
\vskip 0.5cm
Since $\deg D\le \dfrac{11}{2\cdot 2\cdot 3}<1$, we have $g\le 2$ by 
(4.1.3).  But if $\dim R_4\ge 2$, $2$ among $a,b,c$ should be even and 
$R$ will not be normal. Hence $g\le 1$. 
\par
If $g=1$, then $s$ is even and $\deg E=-\dfrac{s}{2}$ by (4.1.2).
Also by (4.1.3), $s\le 4$ and we have always $\deg [2D] =0$. 
Hence $\deg D \le \dfrac{12}{2\cdot 3\cdot 3}=\dfrac{2}{3}$. 
If $s=4$, $\deg D \ge \dfrac{2}{3}$ and we have equality.
\vskip 0.2cm\noindent
{\bf (4-C-1)} \quad $g=1,D=E+\dfrac{2}{3}(P_1+P_2+P_3+P_4),
\deg E =-2, -2E\sim P_1+P_2+P_3+P_4; \quad 
(2,3,3;12)$. 
\vskip 0.2cm
If $g=1, \deg E=-1$ and $s=2$, $\deg D\ge \dfrac{1}{3}$ by (5.3.1). 
On the other hand, if $R_2=0$, then $\deg D \le \dfrac{16}{3\cdot 4\cdot 5}$ 
and we have a contradiction.  Thus we must have $\dim R_2=1$, that is, 
$2E +P_1+P_2\sim 0$. If $r\ge 1$, then $\deg D \ge \dfrac{2}{3}+\dfrac{1}{5}$ 
and on the other hand, $\deg D \le \dfrac{16}{3\cdot 4\cdot 5}$ and we have 
a contradiction. 
If $r=0$, the only possibility is the following;  
\vskip 0.2cm\noindent
{\bf (4-C-2)} \quad $g=1, D=E+\dfrac{2}{3}P_1+\dfrac{5}{7}P_2,
\deg E =-1, -2E\sim P_1+P_2; \quad (2,3,7;16)$. 
\vskip 0.2cm
If $g=1, s=0$ and $\deg E=0$, then $\deg D\le \dfrac{16}{2\cdot 5\cdot 5}
<\dfrac{1}{3}$ and we must have $r=1$. 
Since we must have $\dim R_4=1$, the following case is the only 
possible one. 
\vskip 0.2cm\noindent
{\bf (4-C-3)} \quad $g=1, D=E+\dfrac{2}{9}P, E\ne 0, 2E\sim 0; 
\quad (2,5,9;20)$. 
\vskip 0.2cm
Now, we assume $g=0$. Since $R_4=0$, $a\ge 3$ and in 
(4.1.1) we have $\deg  E= \dfrac{-s-1}{2}$ by (4.1.2).  
Since  $\deg [3D]= \dfrac{s-3}{2}\le 1$,   we get $s\le 5$. If $s=5$, we have the following.  
\vskip 0.2cm\noindent
{\bf (4-C-4)} \quad $g=0, D=E+\dfrac{2}{3}(Q_1+\ldots Q_5), \deg E=-3, 
\quad (3,3,5;15)$. 
\vskip 0.2cm
Next, we assume $s=3, \deg E=-2.$ In this case, $a=3, b\ge 5, \dim R_5= r$ and 
$\dim R_6=r+1$.  
Hence if $r>0, r=1, (a,b,c;h)=(3,5,6;18)$ and 
 $\deg D = \dfrac{1}{5}$. 
\vskip 0.2cm\noindent
{\bf (4-C-5)} \quad $g=0, D=E+\dfrac{2}{3}(Q_1+Q_2+ Q_3)+\dfrac{1}{5}P, 
\deg E=-2, \quad (3,5,6;18)$. 
\vskip 0.2cm
If $r=0$, $\deg [7D] = -2 +t$, where $t$ is the number of $q_i's$ with 
$q_i\ge 7$.  We have the following cases; 
\vskip 0.2cm\noindent
{\bf (4-C-6)} \quad $g=0, D=E+\dfrac{5}{7}(Q_1+Q_2+ Q_3), 
\deg E=-2, \quad (3,7,7;21)$. 
\vskip 0.2cm\noindent
{\bf (4-C-7)} \quad $g=0, D=E+\dfrac{2}{3}Q_1+\dfrac{5}{7}(Q_2+ Q_3), 
\deg E=-2, \quad (3,7,14;28)$. 
\vskip 0.2cm\noindent
{\bf (4-C-8)} \quad $g=0, D=E+\dfrac{2}{3}Q_1+\dfrac{5}{7}Q_2+\dfrac{8}{11}Q_3,
\deg E=-2, \quad (3,7,11;25)$. 
\vskip 0.2cm\noindent
{\bf (4-C-9)} \quad $g=0, D=E+\dfrac{2}{3}(Q_1+Q_2)+\dfrac{11}{15}Q_3, 
\deg E=-2, \quad (3,11,15;33)$. 
\vskip 0.2cm\noindent
{\bf (4-C-10)} \quad $g=0, D=E+\dfrac{2}{3}(Q_1+Q_2)+\dfrac{8}{11}Q_3, 
\deg E=-2, \quad (3,11,18;36)$. 
\vskip 0.2cm\noindent
{\bf (4-C-11)} \quad $g=0, D=E+\dfrac{2}{3}(Q_1+Q_2)+\dfrac{5}{7}Q_3, 
\deg E=-2, \quad (3,14,21;42)$. 
\vskip 0.2cm
Then we treat the case $s=1,\deg E=-1$.  Since $s+r\ge 3$, $r\ge 2$ and 
we have $a=5, b=6$ and the only possible case is;   
\vskip 0.2cm\noindent
{\bf (4-C-12)} \quad $g=0, D=E+\dfrac{2}{3}Q+\dfrac{1}{5}(P_1+P_2), 
\deg E=-1, \quad (5,6,15;30)$. 
\vskip 0.2 cm
This finishes the classification of the case with $a(R)=4$.

\vskip 0.8cm
\centerline{\large\bf The case  $\alpha=5$}
\vskip 0.8cm

We  assume that $R=R(X,D)\cong k[u,v,w]/(f)$ with 
$$\deg(u,v,w;f)=(a,b,c;h); \quad h=a+b+c+5.$$

Since $5D$ is linearly equivalent to $K_X+\FRC(D)$,   we may assume that

$$\leqno{(5.1.1)}\quad \begin{array}{rcl}
D& = & E+\sum_{i=1}^r{}_{q_i \equiv 1 (\mod 5)} \dfrac{q_i-1}{5q_i}P_i+
\sum_{i=1}^s {}_{q_i \equiv 3 (\mod 5)}\dfrac{2q_j-1}{5q_j}P'_j\\
& & +\sum_{k=1}^t {}_{q_k \equiv 2 (\mod 5)}\dfrac{3q_k-1}{5q_k}Q_k+
\sum_{l=1}^u {}_{q_l \equiv 4 (\mod 5)}\dfrac{4q_l-1}{5q_l}Q'_l,
\end{array}$$
where $E$ is an integral divisor on $X$. 

Since $5D\sim $ $K_X+\FRC(D)$, we have 
$$\leqno{(5.1.2)} \quad \begin{array}{cl} &
5E +  \sum_{j=1}^s P'_j+2\sum_{k=1}^t Q_k+3\sum_{l=1}^t Q'_l
 \sim K_X;\\
& \deg E=\dfrac{2g-2-s-2t-3u}{5}\ge \dfrac{2g-2}{5}+\dfrac{r}{6}+
\dfrac{2s}{15}+\dfrac{t}{10}+\dfrac{3u}{20}.
\end{array}$$
\vskip 0.3cm
We divide the cases according to (A) $a=b=1$, 
(B) $a=1,b\ge 2$, (C) $a\ge 2$. 

\vskip 0.5cm
{\bf Case A.  $a=b=1$.}
\vskip 0.5cm
In this case, the type of $R$ is the form $(1,1,c; c+7)$. 
By \ref{bd-of-c}, $c\le 7$ and we have the following cases.
\vskip 0.2cm\noindent
{\bf (5-A-1)} \quad $g=21$, $D=E$ with $\deg E=8$, $5E\sim K_X$ 
;\quad $(1,1,1;8).$ 
\vskip 0.2cm\noindent
{\bf (5-A-2)} \quad $g=12,D=E+\dfrac{1}{2}P$ with 
$\deg E=4, 5E+2P\sim K_X$; \quad $(1,1,2;9).$ 
\vskip 0.2cm\noindent
{\bf (5-A-3)} \quad $g=9,D=E+\dfrac{1}{3}P$ with $\deg E=3, 5E+P\sim K_X$; 
\quad $(1,1,3;10).$ 
\vskip 0.2cm\noindent
{\bf (5-A-4)} \quad $g=6,D=E+\dfrac{1}{6}P$ with $\deg E=2, 5E\sim K_X$; 
\quad $(1,1,6;13).$ 
\vskip 0.2cm\noindent
{\bf (5-A-5)} \quad $g=6,D=E$ with $\deg E=2, 5E\sim K_X$; \quad $(1,1,7;14).$ 

\vskip 0.5cm
{\bf Case B.  $a=1,b\ge 2$.}
\vskip 0.5cm
First, we list the cases $a=1$ and $b=2,3$. 
\vskip 0.2cm\noindent
{\bf (5-B-1)} \quad $g=6$, $D=\dfrac{1}{2}(P_1+\ldots +P_5)$ with 
$2(P_1+\ldots +P_5)\sim K_X$;\quad $(1,2,2;10).$ 
\vskip 0.2cm\noindent
{\bf (5-B-2)} \quad $g=5,D=E+\dfrac{1}{2}P$ with 
$\deg E=4, 5E+2P\sim K_X$; \quad $(1,2,3;11).$ 
\vskip 0.2cm\noindent
{\bf (5-B-3)} \quad $g=4$, $D=\dfrac{1}{2}(P_1+P_2 +P_3)$ with  
$2(P_1+P_2 +P_3)\sim K_X$ ;\quad $(1,2,4;12).$ 
\vskip 0.2cm\noindent
{\bf (5-B-4)} \quad $g=3,D=\dfrac{1}{2}P_1+\dfrac{1}{3}(P_2 +P_3)$ with 
$2P_1+P_2 +P_3\sim K_X$; \quad $(1,2,6;14).$ 
\vskip 0.2cm\noindent
{\bf (5-B-5)} \quad $g=3$, $D=\dfrac{1}{2}P_1+\dfrac{4}{7}P_2$ 
with $2(P_1+P_2)\sim K_X$ ;\quad $(1,2,7;15).$ 
\vskip 0.2cm\noindent
{\bf (5-B-6)} \quad $g=3,D=\dfrac{1}{2}(P_1+P_2)$ with  
$2(P_1+P_2 )\sim K_X$; \quad $(1,2,8;16).$ 
\vskip 0.2cm\noindent
{\bf (5-B-7)} \quad $g=3$, $D=\dfrac{1}{3}(P_1+\ldots +P_4)$ with 
$(P_1+\ldots +P_4)\sim K_X$;\quad $(1,3,3;12).$ 
\vskip 0.2cm\noindent
{\bf (5-B-8)}  \quad $g=3,D=\dfrac{1}{3}P_1+\dfrac{3}{4}P_2$ with 
$P_1+3P_2\sim K_X$; \quad $(1,3,4;13).$ 
\vskip 0.2cm\noindent
{\bf (5-B-9)} \quad $g=2$, $D=\dfrac{1}{3}(P_1+P_2)+\dfrac{1}{6}P_3$ with 
$P_1+P_2\sim K_X$;\quad $(1,3,6;15).$ 
\vskip 0.2cm\noindent
{\bf (5-B-10)} \quad $g=2,D=\dfrac{1}{3}(P_1+P_2)$ with 
$P_1+P_2\sim K_X$; \quad $(1,3,9;18).$ 
\vskip 0.2cm
If $a=1$ and $b\ge 4$, then $\deg D<1$ and $g\le 2$. If $g=2=\dim R_5$, 
then $b\le 5$.
\vskip 0.2cm\noindent
{\bf (5-B-11)} \quad $g=2,D=\dfrac{1}{2}P_1+\dfrac{1}{6}P_2$ with 
$2P_1\sim K_X$; \quad $(1,4,6;16).$ 
\vskip 0.2cm\noindent
{\bf (5-B-12)} \quad $g=2,D=\dfrac{1}{2}P$ with 
$2P\sim K_X$; \quad $(1,4,10;20).$ 
\vskip 0.2cm
In the case $a=1$ and $b\ge 6$, then $D\ge 0, \deg D\le \dfrac{1}{2}$ 
and $g=1$. 
\vskip 0.2cm\noindent
{\bf (5-B-13)} \quad $g=1,D=\dfrac{1}{6}(P_1+P_2+P_3)$; \quad $(1,6,6;18).$ 
\vskip 0.2cm\noindent
{\bf (5-B-14)} \quad $g=1,D=\dfrac{1}{6}(P_1+P_2)$;\quad $(1,6,12;24).$ 
\vskip 0.2cm
If $a=1$ and $b\ge 7$, then $\deg D< \dfrac{1}{3}$. Hence only possibility 
is of the form $D=\dfrac{q-1}{5q}$ with $q\equiv 1$ (mod $5$). 
\vskip 0.2cm\noindent
{\bf (5-B-15)} \quad $g=1,D=\dfrac{2}{11}$; \quad $(1,11,17;34).$ 
\vskip 0.2cm\noindent
{\bf (5-B-16)} \quad $g=1,D=\dfrac{3}{16}P$; \quad $(1,11,16;33).$
\vskip 0.2cm\noindent
{\bf (5-B-17)} \quad $g=1,D=\dfrac{1}{6}P$; \quad $(1,12,18;36).$
\vskip 0.2cm
This finishes the case  $a=1$.
\vskip 0.5cm
{\bf Case C.  $a\ge 2$.}
\vskip 0.5cm
In this case, $\deg D \le \dfrac{12}{2\cdot 2\cdot 3}=1$. 
Since $g=\dim R_5$, $g\le 2$ and $g=2$ only in the following cases.
\vskip 0.2cm\noindent
{\bf (5-C-1)} \quad $g=2,D=E+\dfrac{1}{2}(P_1+P_2)$, with 
$E\ne 0, 2E+P_1+P_2\sim K_X$; \quad $(2,2,3;12).$
\vskip 0.2cm\noindent
{\bf (5-C-2)} \quad $g=2,D=\dfrac{1}{6}P$; \quad $(2,3,5;15).$
\vskip 0.2cm
If $g=1$, $a=2$ then we have the following cases.
\vskip 0.2cm\noindent
{\bf (5-C-3)} \quad $g=1,D=E+\dfrac{1}{2}P_1+\dfrac{1}{2}P_2+
\dfrac{1}{3}P_3+E$, with 
$\deg E=-1, 2E+P_1+P_2\sim E+P_3\sim 0$; \quad $(2,3,7;17).$
\vskip 0.2cm\noindent
{\bf (5-C-4)} \quad $g=1,D=\dfrac{3}{8}P+E$ with 
$2E\sim 0, 5E+P\sim 0; \quad (2,3,8;18).$
\vskip 0.2cm\noindent
{\bf (5-C-5)} \quad $g=1,D=\dfrac{1}{3}P+E$ with 
$2E\sim 0, 5E+P\sim 0; \quad (2,3,10;20).$
\vskip 0.2cm
If $g=1$ and $a\ge 3$, we have $5E\sim 0$ since $\dim R_5=1$ 
and by (6.1.2), $s=t=u=0$. Since $E\ne 0$, $R_n=0$ for $1\le n\le 4$
and we have $a=5,b\ge 6$.  Hence $\deg D\le \dfrac{22}{5\cdot 6\cdot 6}
<\dfrac{1}{6}$.  On the other hand, since $r>0$, we should have 
$\deg E\ge \dfrac{1}{6}$.  Hence there is no case with $g=1,a\ge 3$.
\vskip 0.2cm
Now we treat the case $g=0$.  We divide our discussion into the following cases  
\vskip 0.2cm
\centerline{(i) $R_2\ne 0$, \quad (ii) $R_2=0$ and $R_3\ne 0$, \quad 
(iii) $R_2=R_3=0$. }
\vskip 0.2cm
(i) If $\dim R_2=2$, we have the following cases.
\vskip 0.2cm\noindent
{\bf (5-C-6)} \quad $g=0,D=E+\dfrac{1}{2}
(P_1+\ldots +P_8)+\dfrac{4}{7}P_9$ with 
$\deg E=-4; \quad (2,2,7;16).$
\vskip 0.2cm\noindent
{\bf (5-C-7)} \quad $g=0,D=E+\dfrac{1}{2}
(P_1+\ldots +P_9)$ with 
$\deg E=-4; \quad (2,2,9;18).$
\vskip 0.2cm
If $a=2,b=4$, we have the following cases. 
\vskip 0.2cm\noindent
{\bf (5-C-8)} \quad $g=0,D=E+\dfrac{1}{2}
(P_1+\ldots +P_4)+\dfrac{4}{7}P_5+\dfrac{3}{4}P_6$ with 
$\deg E=-3; \quad (2,4,7;18).$
\vskip 0.2cm\noindent
{\bf (5-C-9)} \quad $g=0,D=E+\dfrac{1}{2}
(P_1+\ldots +P_5)+\dfrac{7}{9}P_6$ with 
$\deg E=-3; \quad (2,4,9;20).$
\vskip 0.2cm\noindent
{\bf (5-C-10)} \quad $g=0,D=E+\dfrac{1}{2}
(P_1+\ldots +P_5)+\dfrac{3}{4}P_6$ with 
$\deg E=-3; \quad (2,4,11;22).$
\vskip 0.2cm
If $a=2$ and $b\ge 6$, from $\deg[2D]=\deg[4D]=0$ and 
$\deg[3D]=-2$ we have $\deg E=-2, t=4, s=u=0$. $b=6$ 
if and only if $r>0$.  
\vskip 0.2cm\noindent
{\bf (5-C-11)} \quad $g=0,D=E+\dfrac{1}{2}
(P_1+\ldots +P_3)+\dfrac{4}{7}P_4+\dfrac{1}{6}P_5$ with 
$\deg E=-2; \quad (2,6,7;20).$
\vskip 0.2cm\noindent
{\bf (5-C-12)} \quad $g=0,D=E+\dfrac{1}{2}
(P_1+\ldots +P_4)+\dfrac{2}{11}P_5$ with 
$\deg E=-2; \quad (2,6,11;24).$
\vskip 0.2cm\noindent
{\bf (5-C-13)} \quad $g=0,D=E+\dfrac{1}{2}
(P_1+\ldots +P_4)+\dfrac{1}{6}P_5$ with 
$\deg E=-2; \quad (2,6,13;26).$
\vskip 0.2cm
If $a=2$ and $b\ge 7$, then we have $\deg E=-2, t=4, r=s=u=0$. 
\vskip 0.2cm\noindent
{\bf (5-C-14)} \quad $g=0,D=E+\dfrac{1}{2}P_1+\dfrac{4}{7}
(P_2+P_3 +P_4)$ with 
$\deg E=-2; \quad (2,7,7;21).$
\vskip 0.2cm\noindent
{\bf (5-C-15)} \quad $g=0,D=E+\dfrac{1}{2}
(P_1+P_2)+\dfrac{4}{7}(P_3 +P_4)$ with
$\deg E=-2; \quad (2,7,14;28).$
\vskip 0.2cm\noindent
{\bf (5-C-16)} \quad $g=0,D=E+\dfrac{1}{2}
(P_1+P_2)+\dfrac{4}{7}(P_3 +P_4)$ with 
$\deg E=-2; \quad (2,7,14;28).$
\vskip 0.2cm\noindent
{\bf (5-C-17)} \quad $g=0,D=E+\dfrac{1}{2}
(P_1+P_2+P_3)+\dfrac{10}{17}P_4$ with 
$\deg E=-2; \quad (2,12,17;36).$
\vskip 0.2cm\noindent
{\bf (5-C-18)} \quad $g=0,D=E+\dfrac{1}{2}
(P_1+P_2+P_3)+\dfrac{7}{12}P_4$ with 
$\deg E=-2; \quad (2,12,19;38).$
\vskip 0.2cm\noindent
{\bf (5-C-19)} \quad $g=0,D=E+\dfrac{1}{2}
(P_1+P_2+P_3)+\dfrac{4}{7}P_4$ with 
$\deg E=-2; \quad (2,14,21;42).$
\vskip 0.2cm
This finishes the case $g=0$ and $a=2$.
\vskip 0.2cm
{\bf (ii) The case  $g=0$ and $a=3$.}
\vskip 0.2cm
If $a=3$ and $b=3$ or $4$, we have the following cases.
\vskip 0.2cm\noindent
{\bf (5-C-20)} \quad $g=0,D=E+\dfrac{1}{3}
(P_1+\ldots+P_5)+\dfrac{3}{4}P_6$ with 
$\deg E=-2; \quad (3,3,4;15).$
\vskip 0.2cm\noindent
{\bf (5-C-21)} \quad $g=0,D=E+\dfrac{3}{4}
(P_1+\ldots+P_4)+\dfrac{1}{3}P_5$ with 
$\deg E=-3; \quad (3,4,4;16).$
\vskip 0.2cm\noindent
{\bf (5-C-22)} \quad $g=0,D=E+\dfrac{1}{3}
P_1+\dfrac{3}{4}(P_2+P_3)+\dfrac{3}{8}P_4$ with 
$\deg E=-2; \quad (3,4,8;20).$
\vskip 0.2cm\noindent
{\bf (5-C-23)}  \quad $g=0,D=E+\dfrac{1}{3}
(P_1+P_2)+\dfrac{3}{4}P_3+\dfrac{7}{9}P_4$ with 
$\deg E=-2; \quad (3,4,9;21).$
\vskip 0.2cm\noindent
{\bf (5-C-24)}  \quad $g=0,D=E+\dfrac{1}{3}
(P_1+P_2)+\dfrac{3}{4}(P_3+P_4)$ with 
$\deg E=-2; \quad (3,4,12;24).$
\vskip 0.2cm
If $a=3$ and $b\ge 6$, then since $\deg[D]=\deg[4D]=-1$ 
by Lemma 0.3 and $\deg[3D]=0$, we have $\deg E=-1, s=3, t=u=0$.
 Also, in this case $r>0$ if and only if $b=6$.  In the latter 
case, since $\deg D\le \dfrac{21}{3\cdot 6\cdot 7}=\dfrac{1}{6}$,
$D=E+\dfrac{1}{3}(P_1+P_2+P_3)+\dfrac{1}{6}P_4$ with 
$\deg E=-1$.
\vskip 0.2cm\noindent
{\bf (5-C-25)}  \quad $g=0,
D=E+\dfrac{1}{3}(P_1+P_2+P_3)+\dfrac{1}{6}P_4$ with 
$\deg E=-1; \quad (3,6,7;21).$
\vskip 0.2cm
If $r=0$, we have the following cases.
\vskip 0.2cm\noindent
{\bf (5-C-26)}  \quad $g=0,
D=E+\dfrac{3}{8}(P_1+P_2+P_3)$ with 
$\deg E=-1; \quad (3,8,8;24).$
\vskip 0.2cm\noindent
{\bf (5-C-27)}  \quad $g=0,
D=E+\dfrac{1}{3}P_1+\dfrac{3}{8}P_2+\dfrac{5}{13}P_3$ with 
$\deg E=-1; \quad (3,8,13;29).$
\vskip 0.2cm\noindent
{\bf (5-C-28)}  \quad $g=0,
D=E+\dfrac{1}{3}P_1+\dfrac{3}{8}(P_2+P_3)$ with 
$\deg E=-1; \quad (3,8,16;32).$
\vskip 0.2cm\noindent
{\bf (5-C-29)}  \quad $g=0,
D=E+\dfrac{1}{3}(P_1+P_2)+\dfrac{7}{18}P_3$ with 
$\deg E=-1; \quad (3,13,18;39).$
\vskip 0.2cm\noindent
{\bf (5-C-30)}  \quad $g=0,
D=E+\dfrac{1}{3}(P_1+P_2)+\dfrac{5}{13}P_3$ with 
$\deg E=-1; \quad (3,13,21;42).$
\vskip 0.2cm\noindent
{\bf (5-C-31)}  \quad $g=0,
D=E+\dfrac{1}{3}(P_1+P_2)+\dfrac{3}{8}P_3$ with 
$\deg E=-1; \quad (3,16,24;48).$
\vskip 0.2cm
This finishes the case $g=0$ and $a=3$.   
\vskip 0.2cm
{\bf (iii) $g=0$, $R_2=R_3=0$.} 
\vskip 0.2cm
First we treat the case $R_4\ne 0$.  It is easy to see that 
hypersurfaces of type $(4,4,c;c+13)$ can not be normal. 
So, if $a=4$, then $n\ge 6$ and we have $\deg E=-2, s+u=2, t+u=3$.
If $a=4$ and $b=6$, we have the following types.
\vskip 0.2cm\noindent
{\bf (5-C-32)}  \quad $g=0,
D=E+\dfrac{1}{2}(P_1+P_2)+\dfrac{1}{3}P_3+\dfrac{7}{9}P_4$ with 
$\deg E=-2; \quad (4,6,9;24).$
\vskip 0.2cm\noindent
{\bf (5-C-33)}  \quad $g=0,
D=E+\dfrac{1}{2}(P_1+P_2)+\dfrac{1}{3}P_3+\dfrac{3}{4}P_4$ with 
$\deg E=-2; \quad (4,6,15;30).$
\vskip 0.2cm
If $a=4$ and $b>6$, then $\deg [6D]<0$. Since $5D \sim K_X+\FRC(D)$, this is 
only possible if the support of $\FRC(D)$ consists of $3$ points. This implies 
$r=s=0, t=1,u=2$.  
\vskip 0.2cm\noindent 
{\bf (5-C-34)}  \quad $g=0,
D=E+\dfrac{1}{2}P_1+\dfrac{7}{9}P_2+\dfrac{11}{14}P_3$ with 
$\deg E=-2; \quad (4,9,14;32).$
\vskip 0.2cm\noindent
{\bf (5-C-35)}  \quad $g=0,
D=E+\dfrac{1}{2}P_1+\dfrac{7}{9}(P_2+P_3)$ with $\deg E=-2; \quad (4,9,18;36).$
\vskip 0.2cm\noindent
{\bf (5-C-36)}  \quad $g=0,
D=E+\dfrac{1}{2}P_1+\dfrac{3}{4}P_2+\dfrac{7}{9}P_3$ with $\deg E=-2;
 \quad (4,18,27;54).$
\vskip 0.2cm\noindent
{\bf (5-C-37)}  \quad $g=0,
D=E+\dfrac{1}{2}P_1+\dfrac{3}{4}P_2+\dfrac{11}{14}P_3$ with 
$\deg E=-2; \quad (4,14,23;46).$
\vskip 0.2cm\noindent
{\bf (5-C-38)}  \quad $g=0,
D=E+\dfrac{1}{2}P_1+\dfrac{3}{4}P_2+\dfrac{15}{19}P_3$ with 
$\deg E=-2; \quad (4,14,19;42).$
\vskip 0.2cm
This finishes the case $a=4$. If $a>4$, then $a\ge 6$ and 
since $\deg[nD]=-1$ by Lemma 1.4, we have $\deg E=-1$. 
Since $6D =(K_X+\FRC(D)) +D$, $\deg [6D] = -3+r+s+t+u\ge 0$. 
Hence $a=6$ and it is easy to see type $(6,6,c; c+17)$ does 
not occur.  Hence we have either $r=2,s=t=0,u=1$ or 
$r=s=t=1, u=0$. In the former case, $\deg D\ge 2\dfrac{1}{6}+\dfrac{3}{4}
-1=\dfrac{1}{12}$. On the other hand, since $b\ge 7$ and $c\ge 8$, 
$\deg D\le \dfrac{26}{6\cdot 7\cdot 8}<\dfrac{1}{12}$. So, this case 
does not occur and we have $r=s=t=1,u=0$. We have the following cases.
\vskip 0.2cm\noindent
{\bf (5-C-39)}  \quad $g=0,
D=E+\dfrac{1}{3}P_1+\dfrac{1}{6}P_2+\dfrac{4}{7}P_3$  with 
$\deg E=-1; \quad (6,7,9;27).$
\vskip 0.2cm\noindent
{\bf (5-C-40)}  \quad $g=0,
D=E+\dfrac{1}{2}P_1+\dfrac{3}{8}P_2+\dfrac{2}{11}P_3$
with $\deg E=-1; \quad (6,8,11;30).$
\vskip 0.2cm\noindent
{\bf (5-C-41)}  \quad $g=0,
D=E+\dfrac{1}{2}P_1+\dfrac{1}{6}P_2+\dfrac{5}{13}P_3$
with $\deg E=-1; \quad (6,8,13;32).$
\vskip 0.2cm\noindent
{\bf (5-C-42)}  \quad $g=0,
D=E+\dfrac{1}{2}P_1+\dfrac{1}{3}P_2+\dfrac{2}{11}P_3$
with $\deg E=-1; \quad (6,22,33;66).$
\vskip 0.2cm\noindent
{\bf (5-C-43)}  \quad $g=0,
D=E+\dfrac{1}{2}P_1+\dfrac{1}{3}P_2+\dfrac{3}{16}P_3$
with $\deg E=-1; \quad (6,16,27;54).$
\vskip 0.2cm\noindent
{\bf (5-C-44)}  \quad $g=0,
D=E+\dfrac{1}{2}P_1+\dfrac{1}{3}P_2+\dfrac{4}{21}P_3$
with $\deg E=-1; \quad (6,16,21;48).$
\vskip 0.2cm
This finishes the ccase $a(R)=5$.

\vskip 0.8cm
\centerline{\large\bf The case  $\alpha=6$}
\vskip 0.8cm

Since $6D$ is linearly equivalent to $K_X+\FRC(D)$,     we may assume that

$$\leqno{(6.1.1)}\quad 
D=E+\sum_{i=1}^r{}_{q_i \equiv 1 (\mod 6)} \dfrac{q_i-1}{6q_i}P_i+
\sum_{i=1}^s {}_{q_i \equiv 5 (\mod 6)}\dfrac{5q_i-1}{6q_i}Q_i,$$
where $E$ is an integral divisor on $X$. 

Since $6D\sim K_X+\FRC(D)$, we have 
$$\leqno{(6.1.2)} \quad 
6E + 4\sum_{i=1}^s Q_i \sim K_X; \quad \deg E=\dfrac{g-1-2s}{3}.$$

Since $\dfrac{q_i-1}{6q_i}\ge \dfrac{1}{7}$ if $q_i \equiv 1 (\mod 6), 
q_i>1$  and $\dfrac{5q_i-1}{6q_i}\ge \dfrac{4}{5}$ if $q_i \equiv 5
 (\mod 6)$,we have 
$$\leqno{(6.1.3)}\quad 
\deg D\ge \deg E+\dfrac{r}{7}+\dfrac{4s}{5}=\dfrac{g-1}{3}+
\dfrac{r}{7}+\dfrac{2s}{15}.$$

\vskip 0.3cm
We divide the cases according to (A) $a=b=1$, 
(B) $a=1,b\ge 2$, (C) $a\ge 2$.

\vskip 0.5cm
{\bf Case A.  $a=b=1$.}
\vskip 0.5cm
In this case, we have the following cases. 
We can calculate the genus $g$ by 
$g=\dim R_6$. 
\vskip 0.2cm\noindent
{\bf (6-A-1)} \quad $g=28$, $D=E$ with $\deg E=9$, $6E\sim K_X$ 
;\quad $(1,1,1;9).$ $X$ is not hyperelliptic.
\vskip 0.2cm\noindent
{\bf (6-A-2)} \quad $g=16$, $D=E$ with $\deg E=5$, $6E\sim K_X$;
\quad $(1,1,2;10).$ $X$ is not hyperelliptic.
\vskip 0.2cm\noindent
{\bf (6-A-3)} \quad $g=10$, $D=E$ with $\deg E=3$, $6E\sim K_X$;
\quad $(1,1,4;12).$ 
\vskip 0.2cm\noindent
{\bf (6-A-4)} \quad $g=7$, $D=E+\dfrac{1}{7}$ with $\deg E=2$, 
$6E\sim K_X$;\quad $(1,1,7;15).$ 
\vskip 0.2cm\noindent
{\bf (6-A-5)} \quad $g=7$, $D=E$ with $\deg E=2$, $6E\sim K_X$ 
;\quad $(1,1,8;16).$ 
\vskip 0.5cm
{\bf Case B.  $a=1, b\ge 2$.}
\vskip 0.5cm
First we list the cases with $a=1,b=2,3,4$.
\vskip 0.2cm\noindent
{\bf (6-B-1)} \quad $g=7$, $D=E$ with $\deg E=2$, 
$6E\sim K_X$;\quad $(1,2,3;12).$ 
\vskip 0.2cm\noindent
{\bf (6-B-2)} \quad $g=4$, $D=E+\dfrac{1}{7}$ with $\deg E=1$, 
$6E\sim K_X$;\quad $(1,2,7;16).$ 
\vskip 0.2cm\noindent
{\bf (6-B-3)} \quad $g=4$, $X$ is hyperelliptic, $D=E$ with $\deg E=1$, 
$6E\sim K_X$;\quad $(1,2,9;18).$ 
\vskip 0.2cm\noindent
{\bf (6-B-4)} \quad $g=4$, $D=E$ with $\deg E=1$, 
$6E\sim K_X$;\quad $(1,3,5;15).$ 
\vskip 0.2cm\noindent
{\bf (6-B-5)} \quad $g=3$, $D=\dfrac{4}{5}P$ with  
$4P\sim K_X$;\quad $(1,4,5;16).$ 
\vskip 0.2cm
If $b\ge 5$, then $\deg D\le \dfrac{3}{5}$.  Hence $E=0$ 
and $s=0$.  This implies $\deg [nD] =0$ for $n\le 6$. 
Thus we have $g=1$ and $b\ge 7$, $\deg D\le \dfrac{3}{7}$.   
\vskip 0.2cm
\noindent
{\bf (6-B-6)} \quad $g=1$, $D=\dfrac{1}{7}(P_1+P_2+P_3); \quad (1,7,7;21).$ 
\vskip 0.2cm
\noindent
{\bf (6-B-7)} \quad $g=1$, $D=\dfrac{1}{7}(P_1+P_2+P_3); \quad (1,7,7;21).$ 
\vskip 0.2cm
\noindent
{\bf (6-B-6)} \quad $g=1$, $D=\dfrac{1}{7}(P_1+P_2); \quad (1,7,14;28).$ 
\vskip 0.2cm
\noindent
{\bf (6-B-7)} \quad $g=1$, $D=\dfrac{1}{7}P_1+\dfrac{2}{13}P_2; \quad (1,7,13;27).$
\vskip 0.2cm 
\noindent
{\bf (6-B-8)} \quad $g=1$, $D=\dfrac{3}{19}P; \quad (1,13,19;39).$ 
\vskip 0.2cm
\noindent
{\bf (6-B-9)} \quad $g=1$, $D=\dfrac{2}{13}P; \quad (1,13,20;40).$ 
\vskip 0.2cm
\noindent
{\bf (6-B-10)} \quad $g=1$, $D=\dfrac{1}{7}P; \quad (1,14,21;42).$ 
\vskip 0.2cm
This finishes the case $a=1,b\ge 2$.
\vskip 0.5cm
{\bf Case C.  $a\ge 2$.}
\vskip 0.5cm
We can easily see that $(a,b)=(2,2),(2,3),(2,4),(3,3)$ does not occur. 
By a similar computation, we can assert $\deg D<\dfrac{1}{3}$ and 
we know that $g\le 1$. 
\vskip 0.2cm
If $g=1$, then $6E+4\sum_{i-1}^s Q_j\sim 0$. If $\deg E=-2$, then 
$s=3$ and $\deg D\ge \dfrac{2}{5}$, which contradicts our previous 
computation.  Hence, if $g=1$, then $\deg E=0$ and $s=0$.
 This implies that $a=2,3$ or $6$ and $\deg[nD]=0$ for $1\le n\le 6$.    
We have the following cases.
\vskip 0.2cm
\noindent
{\bf (6-C-1)} \quad $g=1$, $D=E+\dfrac{1}{7}P$ with $E\ne 0$, 
$2E\sim 0; \quad (2,7,15;30).$ 
\vskip 0.2cm
\noindent
{\bf (6-C-2)} \quad $g=1$, $D=E+\dfrac{2}{13}P$ with $E\ne 0$, 
$2E\sim 0; \quad (2,7,13;28).$ 
\vskip 0.2cm
\noindent
{\bf (6-C-3)} \quad $g=1$, $D=E+\dfrac{1}{7}P$ with $E\ne 0$, 
$3E\sim 0; \quad (3,7,8;24).$ 
\vskip 0.2cm
This finishes the case $g=1$. If $g=0$, we have 
$$6E + 4\sum_{i-1}^s Q_j\sim K_X, \quad \deg E=\dfrac{-1-2s}{3}.$$

If $deg E=-1, s=1$, then $R_n=0$ for $n\le 6$ and this implies 
$\deg D\le \dfrac{28}{7\cdot 7\cdot 8}=\dfrac{1}{14}$.  
On the other hand, since $r+s\ge 3$, $\deg D\ge -1+\dfrac{4}{5}
+\dfrac{2}{7}=\dfrac{6}{35}$, a contradiction !
\vskip 0.2cm
Hence we have $s=4$ and $\deg E=-3$.  Then $\deg [4D]=0$ 
and $\deg [5D] =1$.  Hence we are restricted to the following type.
\vskip 0.2cm
\noindent
{\bf (6-C-4)} \quad $g=0$, $D=E+\dfrac{4}{5}(P_1+\ldots +P_4)$ with 
$\deg E=-3;  \quad (4,5,5;20).$ 
\vskip 0.2cm
This finishes the case $a(R)=6$.

\vskip 0.8cm
Now, we list the table of numbers of cases with given $a(R)\le 6$. 
In the table, br denotes the number of brances of $D$, which is equal 
to the number of branches of the graph of the minimal resolution of 
$\Spec(R)$. In other word, br $=\deg \lceil\FRC(D)\rceil$. 
\vskip 0.8cm
\begin{center}
 {\large 
\begin{tabular}{|c|c|c|c|c|c|c|}\hline
$a(R)$        & 1   & 2 & 3 & 4 & 5 & 6  \\ \hline
\phantom{xxxxxxxxx} &\phantom{xxxxx} &\phantom{xxxxx} &\phantom{xxxxx} &
\phantom{xxxxx} &\phantom{xxxxx} &\phantom{xxxxx} \\ \hline
$g=0$, br$=3$  & 14  & 6  & 7 & 7  &  11 & 0   \\ \hline
$g=0$, br$\ge4$& 8   & 1  & 10& 2  &  22  & 1   \\ \hline
$g=1$       & 6   & 8  & 8 & 7  &   8 &  8   \\ \hline
$g=2$       & 2   & 2  & 3 & 3  & 6  & 0   \\ \hline  
$g=3$       & 1   & 2  & 2 & 3  & 5  & 1    \\ \hline
$g\ge 4$    & 0   & 2  & 4 & 6  & 8  & 9   \\ \hline
Total       & 31  &21  & 34& 28 &  58 &  19   \\ \hline
\end{tabular}}
\end{center}

\vskip 1cm

\end{document}